\newtheorem{Def}{Definition}
\newtheorem{Thm}{Theorem}
\newtheorem{Rem}{Remark}
\newtheorem{Prop}{Proposition}
\newtheorem{lemma}{Lemma}
\newcommand{\s}{\scriptscriptstyle}
\newcommand{\D}{\displaystyle}
\newtheorem{Cor}{Corollary}
\newtheorem{Conj}{Conjecture}
\newcommand{\Path}{\operatorname{Path}}
\newcommand{\chess}{\operatorname{Chess}}
\newcommand{\super}{\operatorname{Tab}}
\newcommand{\wt}{\operatorname{wt}}
\newcommand{\Res}{\operatorname{Res}}
\newcommand{\res}{\operatorname{res}}
\newcommand{\SL}{\operatorname{SL}}
\newcommand{\GL}{\operatorname{GL}}
\newcommand{\pr}{\operatorname{pr}}
\newcommand{\gr}{\operatorname{gr}}
\newcommand{\Tab}{\operatorname{Tab}}
\newcommand{\row}{\operatorname{row}}
\newcommand{\col}{\operatorname{col}}
\newcommand{\set}{\operatorname{set}}
\newcommand{\ten}{10}
\newcommand{\eleven}{11}
\newcommand{\twelve}{12}
\title[Block-Toeplitz determinants, chess tableaux,
type $\widehat{A_1}$ GLS $\varphi$-map]
{Block-Toeplitz determinants, chess tableaux, and
the type $\widehat{A_1}$ Geiss-Leclerc-Schr\"oer $\varphi$-map}
\author[Jeanne Scott]{Jeanne Scott} 
\address{School of Mathematics\\
University of Leeds\\
Leeds LS2 9JT\\
United Kingdom}
\email{jscott@maths.leeds.ac.uk}
\thanks{Thanks to both Robert Marsh 
and Bill Crawley-Boevey
for discussion, their advice,
and help during the drafting of this 
paper. The author was supported by 
EPSRC grant EP/C01040X/1} 
\begin{document}

\maketitle

\begin{abstract}

\bigskip
\bigskip
\noindent
We evaluate the Geiss-Leclerc-Schr\"oer $\varphi$-map
for {\it shape modules}  
over the preprojective algebra $\Lambda$ of type $\widehat{A_1}$
in terms of matrix minors arising from the 
block-Toeplitz representation of 
the loop group $\SL_2(\mathcal{L})$. 
Conjecturally these minors are among the  
cluster variables for coordinate rings of 
unipotent cells within $\SL_2(\mathcal{L})$.
In so doing we compute the Euler
characteristic of any {\it generalized flag variety}
attached to a shape module by counting standard tableaux
of requisite shape and {\it parity}; alternatively by
counting {\it chess tableaux} of requisite shape and
content.

\end{abstract}

\bigskip
\section*{Introduction:}

\bigskip
\bigskip
\noindent
In \cite{Rigid} C. Geiss, B. Leclerc, and J. Schr\"oer
initiated the study of the (generalized) tilting theory for
preprojective algebras of Dynkin type $\Delta$ vis \'a  vie
the cluster algebra structure  
of the coordinate ring of the maximal unipotent
group $U_+$ attached to $\Delta$ under the Cartan-Killing classification;
see \cite{CA3}, \cite{CA1}, and \cite{CA2} regarding {\it cluster
algebras}.
Specifically they construct
an explicit map $\varphi$ from the module category of the
preprojective algebra to the coordinate ring of the corresponding
maximal unipotent group
which transforms exceptional objects into cluster variables
and maximal rigid modules into clusters. 
The $\varphi$-map can be interpreted (and this is the view
initially taken here) as type of
partition function which records the Euler characteristics
of {\it generalized flag varieties} attached to the module.

\bigskip
\noindent
Recently
both \cite{GLS2} and \cite{BIRS} have independently 
made the new step of examining the GLS $\varphi$-map in  
the affine setting. In particular \cite{BIRS} 
studied examples of unipotent cells of the loop group
$\SL_2(\mathcal{L})$ and proved that their coordinate
rings --- in accordance with the predictions made in
\cite{CA3} --- are cluster algebras of geometric type.
This was accomplished in part by evaluating the type
$\widehat{A_1}$ GLS $\varphi$-map for a fixed family of
nilpotent finite dimensional modules over the preprojective
algebra of type $\widehat{A_1}$. For each unipotent
cell in $\SL_2(\mathcal{L})$ the authors of \cite{BIRS}
conjecture 
an explicit list of nilpotent $\Lambda$-modules
whose images under the type $\widehat{A_1}$
GLS $\varphi$-map form the initial seed generating the
cluster algebra structure of the coordinate ring 
of the unipotent cell. Moreover  
conjecture (4.3) of \cite{BIRS} predicts 
determinantal expressions for these initial
cluster variables.

\bigskip
\noindent
This paper evaluates 
the type $\widehat{A_1}$ GLS $\varphi$-map
over a class of
nilpotent $\Lambda$-modules called {\it shape
modules} ---
indeed a class which properly contains those modules  
stipulated in conjectures (4.1)-(4.3) of \cite{BIRS}
--- and expresses the result determinantally
in order to settle (4.3) of \cite{BIRS}. The
proof entails computing the Euler characteristic
of any generalized flag variety attached to a shape
module; this is accomplished combinatorially
by counting standard tableaux of requisite shape
and parity. We now go into more detail:

\bigskip
\noindent
Recall that the preprojective algebra $\Lambda$ 
of type $\widehat{A_1}$ is defined as the  
quotient of the {\it path algebra}
associated to the quiver $Q$

\[ \xymatrix{ 0 \ar@/^1pc/[r]^{\alpha} \ar@/_2.5pc/[r]_{\ \beta^*}  
& 1 \ar@/^1.1pc/[l]^{\beta}  \ar@/_2.5pc/[l]_{\ \alpha^*} } \]

\bigskip
\bigskip
\noindent
by the ideal $I$ generated by $\alpha^*\alpha - \beta \beta^*$ and 
$\beta^*\beta - \alpha \alpha^*$. Let $e_0$ and $e_1$ denote the
idempotents of $\Lambda$. 

\bigskip
\noindent
For a finite dimensional left $\Lambda$-module $M$
of dimension $\dim M = n$ together with a choice
of bit string ${\bf d} = (d_1, \dots, d_n)$ in
$\{0,1\}^n$ the {\it generalized flag variety}
$\mathcal{F}^{\s \Lambda}_{\s \bf d}\big(M\big)$ 
is the variety of all $\Lambda$-composition
series $M_n \supset \cdots
\supset M_0$ with $M_n = M$ and $M_0 = \{ 0 \}$
such that $M_t/M_{t-1} \simeq S_{d_t}$ whenever
$n \geq t \geq 1$. Here $S_0$ and $S_1$ 
are the simple left $\Lambda$-modules 
associated to the vertices labeled $0$ and $1$ 
in the quiver $Q$.

\begin{Def}[Type $\widehat{A_1}$ GLS $\varphi$-map]
Let ${\bf i} = (i_1, \dots, i_k)$ be 
an alternating bit string in $\{0,1\}^k$
and let $a_1, \dots, a_k$ denote
--- for the moment --- formal variables.
If $M$ is a finite dimensional left $\Lambda$-module then

\[ \varphi_{\s M} (a_1, \dots, a_k) \ 
:= \ \ {\D \sum_{ \textstyle  \ \ {\bf j} \in \Bbb{Z}_{\s \geq 0}^k }  
\chi \bigg( 
\mathcal{F}^{\s \Lambda}_{\bf  i^j } (M) \bigg) \ 
{a_1^{j_1} \cdots a_k^{j_k} \over {j_1! \cdots j_k!}} } \]

\bigskip
\noindent
where ${\bf i}^{\bf j}$ is the bit string in $\{0,1\}^n$ given by

\[ \big( i_{\sigma(1)}, \dots, i_{\sigma(n)} \big) \]

\bigskip
\noindent
with $n = j_1 + \dots + j_k$ and where 

\begin{equation}
\sigma(t) := \min \big\{ s \, \big| \, j_1 + \dots + j_s \geq t \big\} 
\end{equation}

\bigskip
\noindent
whenever $n \geq t \geq 1$. The symbol $\chi$ denotes Euler characteristic
(for cohomology with compact support; see \cite{toric}).

\end{Def}

\bigskip
\noindent
Section (1) of this paper begins with a quick 
survey of partition and
tableau combinatorics. The notion of
$i$-{\it parity} of a standard tableau is defined 
together with the auxiliary notion of a {\it chess tableau}.
Proposition (1) proves that 
the number of standard tableaux of shape $\lambda$ and 
$i$-parity ${\bf i}^{\bf j}$ equals $j_1! \cdots j_k!$
times the number of 
chess tableaux of shape $\lambda$ and 
content ${\bf j}$ where ${\bf i}$ is an alternating bit string
in $\{0,1\}^k$ and ${\bf j} \in \Bbb{Z}_{\s \geq 0}^k$.

\bigskip
\noindent
In section (2) we give a construction which
associates to a pair $\mu \subset \lambda$ of 
ordered partitions and
choice of parity $i =0, 1$ a nilpotent left
$\Lambda$-module called the {\it skew-shape module}.
The modules considered in chapter (4) of 
\cite{BIRS} are particular examples.
{\it Shape modules} are defined as skew-modules
where the smaller partition $\mu$ is empty.
Following this section (3) details a proof of our first theorem:

\begin{Thm}
Let $M$ be a shape module of shape $\lambda$, 
parity $i$, and dimension $n$. If ${\bf d}
= (d_1, \dots, d_n)$ is a bit string in 
$\{0,1\}^n$ then the Euler characteristic
$\chi \Big( \, \mathcal{F}^{\s \Lambda}_{\s \bf d}(M) \, \Big)$
equals the number of standard tableaux $T$
of shape $\lambda$ whose $i$-parity equals ${\bf d}$.
\end{Thm}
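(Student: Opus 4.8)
The plan is to compute the Euler characteristic by exhibiting an algebraic torus action on the generalized flag variety $\mathcal{F}^{\s \Lambda}_{\s \bf d}(M)$ whose fixed points are isolated, in bijection with standard tableaux of shape $\lambda$ and $i$-parity ${\bf d}$, and then invoke the localization principle $\chi(\mathcal{F}) = \chi(\mathcal{F}^{T}) = \#\mathcal{F}^{T}$ for a variety with torus action (cf.\ \cite{toric}). First I would record the explicit linear-algebra description of the shape module $M$ of shape $\lambda$ and parity $i$ from section (2): its underlying vector space has a basis indexed by the boxes of the Young diagram of $\lambda$, graded over the two vertices $\{0,1\}$ according to the checkerboard colouring starting with colour $i$, and the arrows $\alpha,\beta,\alpha^{*},\beta^{*}$ act as the "move one box" maps (horizontally and vertically in the diagram), with the preprojective relations $\alpha^{*}\alpha - \beta\beta^{*}$ and $\beta^{*}\beta-\alpha\alpha^{*}$ holding box-by-box. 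This box basis carries a natural action of a torus $T \simeq (\mathbb{C}^{\times})^{\ell(\lambda)}$ (one scalar per row, say), or more robustly the maximal torus of graded automorphisms of $M$; one checks it respects the relations and hence acts on the variety of $\Lambda$-composition series.

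Next I would analyze the $T$-fixed points. A composition series $M = M_{n} \supset \cdots \supset M_{0} = \{0\}$ with $M_{t}/M_{t-1} \simeq S_{d_{t}}$ is $T$-fixed exactly when every $M_{t}$ is a $T$-submodule, i.e.\ a span of box-basis vectors. The condition that each successive quotient is simple of the prescribed colour $d_{t}$, together with the requirement that $M_{t}$ be a $\Lambda$-submodule (closed under all four arrows, hence "downward closed" in the diagram order since the arrows move toward the corner), forces the sequence of subsets $\emptyset = B_{0} \subset B_{1} \subset \cdots \subset B_{n}$ to add one box at a time, each $B_{t}$ a Young subdiagram of $\lambda$. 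Reading the order in which boxes are added is precisely a standard Young tableau $T$ of shape $\lambda$; the colour constraint $M_{t}/M_{t-1}\simeq S_{d_{t}}$ says the box filled with entry $t$ has checkerboard colour $d_{t}$, which is exactly the statement that the $i$-parity of $T$ equals ${\bf d}$ (using the parity conventions of section (1), with the starting colour $i$). Conversely every such tableau yields a flag of $\Lambda$-submodules, so $\mathcal{F}^{\s \Lambda}_{\s \bf d}(M)^{T}$ is in bijection with the set of standard tableaux of shape $\lambda$ and $i$-parity ${\bf d}$.

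Finally, since this fixed-point set is finite (hence isolated), the localization theorem for the Euler characteristic with compact supports gives $\chi\big(\mathcal{F}^{\s \Lambda}_{\s \bf d}(M)\big) = \#\, \mathcal{F}^{\s \Lambda}_{\s \bf d}(M)^{T}$, which is the asserted count. The main obstacle I anticipate is the careful verification in the second step that the submodule condition really does restrict $T$-fixed flags to \emph{Young-diagram} chains rather than more general colour-compatible box chains: one must use the preprojective relations to rule out submodules that pick up a box before one of its predecessors in the diagram order, and handle the two boundary cases (a box on the first row/column has only one "incoming" neighbour) cleanly. A secondary point requiring care is confirming that the torus I use is large enough — that $M$ admits no non-trivial $T$-fixed submodule beyond the spans of box-subsets — which again follows from the explicit multiplicity-free grading of $M$ by box, but should be stated precisely; if the naive one-parameter-per-row torus is insufficient in degenerate cases (e.g.\ rectangular $\lambda$ with symmetry), one passes to the full torus of graded automorphisms, under which each weight space is one-dimensional.
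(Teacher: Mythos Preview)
Your overall strategy---torus localization with isolated fixed points indexed by tableaux---is sound and genuinely different from the paper's proof, but the torus you propose does not work as stated. Scaling each row of $M$ by an independent scalar is \emph{not} a $\Lambda$-module automorphism: two of the four arrows send $v_{s,t}$ to the adjacent row $v_{s\pm 1,t}$, so they intertwine the row weights, and your line ``one checks it respects the relations'' would fail upon checking. Consequently the per-row torus does not even preserve the set of $\Lambda$-submodules, so it does not act on $\mathcal{F}^{\Lambda}_{\bf d}(M)$ at all; the worry you flag (that the torus might be too small) is not the real obstruction. The easy fix is to use the rank-two torus acting by $v_{s,t}\mapsto z_1^{\,s}z_2^{\,t}v_{s,t}$: this still does not commute with the arrows, but it \emph{normalizes} the image of $\Lambda$ in $\operatorname{End}(M)$ (conjugation rescales each of $\alpha,\beta,\alpha^*,\beta^*$ by a global nonzero scalar and fixes $e_0,e_1$), which is exactly what is needed for a well-defined action on the variety of $\Lambda$-composition series. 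Since every box has a distinct $(s,t)$-weight, the rest of your argument then goes through verbatim: fixed subspaces are spans of box-subsets, fixed $\Lambda$-submodules are order ideals hence Young subdiagrams, and fixed flags are standard tableaux of the required $i$-parity. Your anticipated ``main obstacle'' dissolves too---the order-ideal condition follows from closure under the four arrows alone, with no separate appeal to the preprojective relations.

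The paper takes a different route. It stratifies $\mathcal{F}^{\Lambda}_{\bf d}(M)$ by the Spaltenstein pieces $\Omega^{\Lambda}_{T,{\bf d}}(M)$ recording the $\mathbb{C}[\delta]$-tableau type of a composition series (Lemma~1), and then proves by induction on the number of columns of $\lambda$ that each piece is either empty or has Euler characteristic $1$ (Corollaries~1 and~2). The inductive step peels off the first column: a projection $\pi_{\bf c}$ maps $\Omega^{\Lambda}_{T,{\bf d}}(M)$ onto a product of pieces for the column submodule $P$ and the quotient $M/P$ (Proposition~2), with affine fibres. Your torus argument, once repaired, is shorter and more conceptual; the paper's cell-by-cell analysis is heavier but yields finer geometry---each nonempty $\Omega^{\Lambda}_{T,{\bf d}}(M)$ is exhibited as an iterated affine bundle---which is what motivates Conjecture~1 on cell dimensions and the $\mathbb{F}_q$-point count.
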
 

\bigskip
\noindent
Conjecture (1) of section (3) refines Theorem (1)
and tallies the number of $\Bbb{F}_q$-rational points
of $\mathcal{F}^{\Lambda}_{\s \bf d}\big(M\big)$ 
when $\Lambda$ is viewed as an algebra over a finite
field $\Bbb{F}_q$ with $q$ elements.

\bigskip
\noindent
The {\it algebraic loop group} $\SL_2(\mathcal{L})$ is the group consisting of all
$\mathcal{L}$-valued $2 \times 2$ matrices $g = \big(g_{ij}\big)$ with determinant 
$1$ where $\mathcal{L}$ is the Laurent polynomial ring 
$\Bbb{C}\big[t,t^{-1} \big]$. An element $g \in \SL_2(\mathcal{L})$
is viewed as encoding a regular 
map $g:\Bbb{C}^* \longrightarrow SL_2(\Bbb{C})$ given by 
$z \mapsto \big(g_{ij}(z)\big)$; a closed contour or ``loop'' 
in $\SL_2(\Bbb{C})$ 
is obtained upon restricting 
the map to the circle group $S^1$ in $\Bbb{C}^*$ and taking its image
--- hence the name.

\bigskip
\noindent
The {\it maximal unipotent subgroup} 
$U_+$ is the subgroup of $\SL_2(\mathcal{L})$ containing all loops $g: \Bbb{C}^*
\longrightarrow \SL_2(\Bbb{C})$ which extend to $0$ and for which 
$g(0)$ is an upper triangular unipotent matrix. As $\mathcal{L}$-valued matrices

\[ U_+  \ = \ \Bigg\{ g \in \begin{pmatrix}
1 + t\Bbb{C}[t] & \Bbb{C}[t] \\ \\ t\Bbb{C}[t] & 1 + t\Bbb{C}[t] 
\end{pmatrix} \ \Bigg| \ \text{with $\det(g) = 1$} \ \Bigg\}. \]

\bigskip
\noindent
For $i=0,1$ and $a \in \Bbb{C}^*$ 
let $x_i: \Bbb{C}^* \longrightarrow
U_+$ denote the 1-parameter subgroups defined by

\[ x_0(a):= \begin{pmatrix} 1 & 0 \\ at & 1 \end{pmatrix} \qquad
x_1(a):= \begin{pmatrix} 1 & a \\ 0 & 1 \end{pmatrix} \]

\bigskip
\noindent
The set of elements $\mathcal{O}_+$ 
which factorize as $x_{i_1}(a_1) \cdots x_{i_k}(a_k)$ for
some alternating bit string ${\bf i} = (i_1, \dots, i_k)$ 
in $\{0,1\}^k$, for some choice of parameters $a_1, \dots, a_k$
in $\Bbb{C}^*$, and for some $k$
is a Zariski open subset within $U_+$. Consequently
a regular function over $U_+$ is uniquely determined
by its values over $\mathcal{O}_+$.

\bigskip
\noindent
In section (4) we define regular functions 
$\Delta^{(i)}_{\mu, \lambda}: \SL_2(\mathcal{L})
\longrightarrow \Bbb{C}$ indexed by a choice
of parity $i = 0,1$ together with a pair of partitions
$\mu$ and $\lambda$. These functions are expressed as
minors of the infinite block-Toeplitz matrix $T_g$
associated to the argument $g \in \SL_2(\mathcal{L})$.
In particular the minors consider in \cite{BIRS}
are of the form $\Delta^{(i)}_{\mu ,\lambda}$.
These minors are shown to satisfy an $i$-Pieri rule 
--- reminiscent of the generalized Pieri identities
considered by \cite{Lam} --- and as
such behave like Schur polynomials which carry
a parity. This view is reinforced by  
remark (7) in section (5) which expresses
$\Delta^{(i)}_{{\s \emptyset}, \lambda}$ 
as a generating function for chess tableaux
of shape $\lambda$ and parity $i$.
Section (4) ends with  
conjecture (2) which claims  
that the type $\widehat{A_1}$-{\it generalized
minors} of Fomin-Zelevinsky (see \cite{Bruhat})
are among these block-Toeplitz matrix minors.

\bigskip
\noindent
In section (5) 
the restriction of the minors $\Delta^{(i)}_{\mu, \lambda}$
to the maximal unipotent subgroup $U_+$ of the
loop group are studied combinatorially by means of
pairwise non-crossing families of paths in a weighted
planar network $\Gamma_{\bf i}({\bf a})$. 
Proposition (3) sets up a weight and
content preserving bijection between families of
non-crossing paths and chess tableaux which is
then used to prove the main result of this paper:

\begin{Thm}
Let $M$ be a shape module of shape $\lambda$, parity $i$, 
and dimension $n$. Let 
${\bf i} = (i_1, \dots, i_k)$ be an alternating
bit string in $\{0,1\}^k$ and let $a_1, \dots, a_k$
be parameters in $\Bbb{C}^*$ then

\[ 
{\mathlarger{\mathlarger{\mathlarger\Delta}}}_{ {\s \emptyset} , \lambda}^{(i)} 
\Big( x_{i_1}(a_1) \cdots x_{i_k}(a_k) \Big)
\ = \ \varphi_{\s M}\big(a_1, \dots, a_k\big) \]

\end{Thm}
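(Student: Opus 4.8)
The plan is to connect the two sides through the combinatorics of chess tableaux, using the results advertised earlier in the excerpt as black boxes. On the left, Remark (7) of Section (5) expresses $\Delta^{(i)}_{\emptyset,\lambda}$ restricted to $U_+$ as a generating function over chess tableaux of shape $\lambda$ and parity $i$; more precisely, Proposition (3) of Section (5) furnishes a weight- and content-preserving bijection between families of pairwise non-crossing paths in the planar network $\Gamma_{\bf i}({\bf a})$ and chess tableaux, so that evaluating the block-Toeplitz minor $\Delta^{(i)}_{\emptyset,\lambda}$ at the factorized element $x_{i_1}(a_1)\cdots x_{i_k}(a_k)$ — which by Lindström--Gessel--Viennot equals the generating function for such non-crossing path families — yields $\sum_{C} \prod_{s=1}^{k} a_s^{c_s}$, the sum running over chess tableaux $C$ of shape $\lambda$ and parity $i$, where $c_s$ is the number of entries equal to $s$ in $C$ and ${\bf c}(C) = (c_1,\dots,c_k)$ is its content. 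First I would make this chain of identifications explicit and record that
\[ \Delta^{(i)}_{\emptyset,\lambda}\Big( x_{i_1}(a_1)\cdots x_{i_k}(a_k) \Big) \ = \ \sum_{{\bf j} \in \Bbb{Z}_{\s\geq 0}^k} \big( \#\{ \text{chess tableaux of shape } \lambda, \text{ parity } i, \text{ content } {\bf j} \} \big) \, a_1^{j_1}\cdots a_k^{j_k}. \]

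Next I would expand the right-hand side. By definition of the $\varphi$-map, $\varphi_{\s M}(a_1,\dots,a_k) = \sum_{\bf j} \chi\big(\mathcal{F}^{\s\Lambda}_{\bf i^j}(M)\big)\, a_1^{j_1}\cdots a_k^{j_k}/(j_1!\cdots j_k!)$. Theorem (1) identifies $\chi\big(\mathcal{F}^{\s\Lambda}_{\bf d}(M)\big)$, for $M$ a shape module of shape $\lambda$ and parity $i$, with the number of standard tableaux of shape $\lambda$ whose $i$-parity equals the bit string ${\bf d}$; applying this with ${\bf d} = {\bf i}^{\bf j}$ gives $\chi\big(\mathcal{F}^{\s\Lambda}_{\bf i^j}(M)\big) = \#\{\text{standard tableaux of shape } \lambda \text{ with } i\text{-parity } {\bf i}^{\bf j}\}$. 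Then Proposition (1) of Section (1) says precisely that this count equals $j_1!\cdots j_k!$ times the number of chess tableaux of shape $\lambda$ and content ${\bf j}$ (the alternating bit string ${\bf i}$ being the one fixed throughout). Substituting, the factorials cancel against the denominators, and the coefficient of $a_1^{j_1}\cdots a_k^{j_k}$ in $\varphi_{\s M}(a_1,\dots,a_k)$ becomes exactly the number of chess tableaux of shape $\lambda$, parity $i$, and content ${\bf j}$ — matching the coefficient computed on the left-hand side term by term, which proves the theorem.

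Since the non-trivial content has been front-loaded into Theorem (1), Proposition (1), Proposition (3), and Remark (7), the proof at this point is essentially a bookkeeping argument that lines up three coefficient-wise identities. The step I expect to require the most care is the left-hand side evaluation: one must verify that the particular minor of the block-Toeplitz matrix $T_g$ defining $\Delta^{(i)}_{\emptyset,\lambda}$ corresponds, under the network $\Gamma_{\bf i}({\bf a})$ built from the factorization $x_{i_1}(a_1)\cdots x_{i_k}(a_k)$, to the source/sink pair whose non-crossing path families Proposition (3) matches with shape-$\lambda$ parity-$i$ chess tableaux — i.e. that the indexing conventions for rows, columns, and the parity shift all cohere. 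I would check this by tracking how the $1$-parameter subgroups $x_0(a)$ and $x_1(a)$ act as elementary bidiagonal factors on $T_g$ and confirming that the resulting directed network is exactly $\Gamma_{\bf i}({\bf a})$, after which the Lindström--Gessel--Viennot lemma applies verbatim. A secondary point worth a sentence is convergence: both sides are formal in $a_1,\dots,a_k$ when the $a_s$ are treated as indeterminates (the $\varphi$-map is defined that way, and the minor is polynomial in the matrix entries), so equality of all coefficients is equality of the two expressions, with no analytic subtlety.
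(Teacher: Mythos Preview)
Your proposal is correct and follows essentially the same route as the paper's proof: express $\Delta^{(i)}_{\emptyset,\lambda}$ via Lindstr\"om and Proposition~(3) as a chess-tableau generating function, then match against $\varphi_M$ using Theorem~(1) together with Proposition~(1) (packaged in the paper as Corollary~(3)). One small correction: the chess tableaux arising on both sides carry parity $i^* \equiv i + i_1 + 1 \pmod 2$, not $i$ --- this is precisely the ``parity shift'' you flagged, and since Propositions~(1) and~(3) produce the same $i^*$, the coefficients still agree term by term.
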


\bigskip
\bigskip
\section{Partitions and Tableaux:}

\bigskip
\bigskip
\noindent
By a partition $\lambda$ we will mean a non-increasing 
infinite sequence
of non-negative integers $\big( \lambda_0 \geq \lambda_1 \geq \lambda_2 
\geq \dots
\big)$ with is eventually zero;
as short hand we only write the non-zero terms.
The size $|\lambda|$ of $\lambda$
is defined as the sum $\lambda_0 + \lambda_1 + \dots$ and
we say the $\lambda$ is a partition of $m$ if 
$|\lambda| = m$.
In the context of partitions the symbol $\emptyset$ will 
denote the {\it empty partition} defined by $\lambda_n = 0$
whenever $n \geq 0$. 
For a partition $\lambda$ let $N_\lambda := \max \big\{ 
n \, \big| \, \lambda_n > 0 \big\}$ with the understanding
that $N_\emptyset = 0$. Given a partition $\lambda$, a
choice of parity $i = 0,1$, and a non-negative integer 
$N$ define the $\set^{(i)}_N (\lambda)$ by

\[ \set^{(i)}_N (\lambda) \ := \ \big\{ \lambda_n + i -n \,
\big| \, N \geq n \geq 0 \big\} \]

\bigskip
\noindent
In this paper a partition $\lambda$ of $n$ will be synonymous
with its Young diagram or {\it shape}: This is a
left-justified arrangement of $n$ boxes into rows 
whereby the top row contains $\lambda_0$ boxes, the
next $\lambda_1$, and so on. For instance shape of the
partition $\lambda = (4,3,2,2,1)$ is

\[ \Yvcentermath1 \yng(4,3,2,2,1) \]

\bigskip
\noindent
Each box in a shape will be coordinatized by its
row and column positions $s,t$ measured from the
top and left respectively. We employ the
convention that the top row and left-most
column are counted as row and column zero.

\begin{Def} Let $\lambda$ be a partition
and let $i = 0, 1$
be a choice of parity. The {\bf $i$-parity} of a box with
row and column coordinates $s,t$ in the 
the shape of $\lambda$ is by
definition the parity of $s+t+i$.
\end{Def}

\bigskip
\noindent
By definition two partitions $\mu = (\mu_0 \geq \mu_1
\geq \dots )$ and $\lambda = (\lambda_0 \geq \lambda_1
\geq \dots)$ are ordered $\mu \subseteq \lambda$ if
$\mu_t \leq \lambda_t$ for all integers
$t \geq 0$. The symbol $\subseteq$ is justified by
the fact that $\mu \subseteq \lambda$ if and only if
the shape of $\mu$ is contained in the shape of $\lambda$.
In this case let $\lambda/ \mu $ denote the {\it 
skew-shape} obtained by removing $\mu$ from $\lambda$ as shapes. 
For example: If $\mu = (2,1)$ and $\lambda = (4,3,2,2,1)$ then $\mu$,
$\lambda$, and $\lambda / \mu$ are depicted respectively by

\[ \Yvcentermath1 \mu = \yng(2,1) \qquad \lambda = \yng(4,3,2,2,1) 
\qquad 
\young(::\hfil \hfil,:\hfil \hfil,\hfil \hfil,\hfil \hfil,\hfil) \]

\bigskip
\noindent
Let $\lambda$ be a partition and ${\bf q}=(q_1, \dots, q_m)$
be a non-negative integer $m$-tuple in $\Bbb{Z}^m_{\s \geq 0}$
such that $q_1 + \dots + q_m = |\lambda |$.
For the purposes of this paper, 
a {\it semi-standard tableau} $T$ of shape $\lambda$ 
and content ${\bf q}$ is an assignment whereby each
box in shape of $\lambda$ is labeled by an  
integer $t$ in $[1 \dots m]$ such that 

\bigskip
\indent \indent $\bullet$ each index $t \in [1 \dots m]$
is used exactly $q_t$ times

\indent \indent $\bullet$ the indices $t$ strictly increase when read
from left to right in each row 

\indent \indent $\bullet$ the indices $t$ weakly increase 
when read from top to bottom in each column

\bigskip
\noindent
In the event $n \geq |\lambda|$ and ${\bf q} =
(q_1, \dots, q_n)$ is
a bit string in $\{0,1\}^n$ with
$q_1 + \dots + q_n = |\lambda|$ we say that 
$T$ is a {\it standard tableau of content ${\bf q}$}.
Note that if in this case the indices 
$t$ of $T$ are read from top to bottom in each column they 
will necessarily strictly increase. 
By convention a standard tableau $T$ of
shape $\lambda$ 
without any reference to content will 
be understood to be a standard tableau
of shape $\lambda$ and content ${\bf q}
\in \{0,1\}^{|\lambda|}$ where $q_t = 1$ whenever
$|\lambda| \geq t \geq 1$.
Below are examples of 
a standard 
and a semi-standard tableaux 
of shape $\lambda = (4,3,2,2,1)$
with the later having content  
${\bf q} = (2,1,1,3,2,2,1)$:

\[  \Yboxdim{15pt}  
\young(1257,369,4\ten,8\twelve,\eleven)
\qquad \qquad 
\young(1245,134,46,56,7) \]

\bigskip
\noindent
For a partition $\lambda$ and a bit string
${\bf q}$ in $\{0,1\}^n$ with $n \geq |\lambda|$
and $q_1 + \dots + q_n = |\lambda|$ 
the set of all standard tableaux
of shape $\lambda$ and content ${\bf q}$ will be
denoted $\Tab_{\s \bf q}(\lambda)$; the
set of all standard tableaux of shape
$\lambda$ will be denoted $\Tab(\lambda)$

\bigskip
\noindent
Given an arbitrary non-negative integer $m$-tuple 
${\bf q} = (q_1, \dots, q_m)$ in $\Bbb{Z}^m_{\s \geq 0}$
let $\bf [q]$ denote the {\it indicator set}, i.e.

\[ {\bf [q]} \ := \ \big\{ t \, \big| \, q_t \ne 0 \big\}  \]

\bigskip
\noindent
For $n \geq m$ let $\{0,1\}^n_m$ denote the set of bit
strings ${\bf q}$ such that ${\bf [q]}$ has cardinality 
$m$. We may express the indicator set of
a bit string ${\bf q}$ in $\{0,1\}^n_m$ 
as ${\bf [q]} = \big\{ r_1 < \cdots < r_m \big\}$.
For a partition $\lambda$ of $m$ 
and a tableau $T$ in $\Tab_{\s \bf q}(\lambda)$ let
$\overline{T}$ be the tableau in 
$\Tab(\lambda)$ obtained by replacing each $r_t$ in $T$ 
by $t$ whenever $m \geq t \geq 1$. Clearly the mapping
$T \longmapsto \overline{T}$ defines a bijection
between $\Tab_{\s \bf q}(\lambda)$ and
$\Tab(\lambda)$.

\bigskip
\noindent

\bigskip
\noindent
Assume $\lambda$ is a partition of $m$ and 
fix a bit string ${\bf q} \in \{0,1\}^n_m$
and express it indicator set as ${\bf [q]}
= \big\{ r_1 < \cdots < r_m \big\}$.
A {\it ${\bf q}$-step flag of partitions} in $\lambda$ is a
weakly increasing  
sequence of partitions 
$\lambda^{(0)} \subseteq \cdots \subseteq \lambda^{(n)}$ 
with $\lambda^{(0)} = \emptyset$
and $\lambda^{(n)} = \lambda$ 
such that 
the shapes of $\lambda^{(t-1)}$ and $\lambda^{(t)}$ 
differ by exactly one corner box whenever $q_t = 1$
and are equal whenever $q_t = 0$.
Associate to a ${\bf q}$-step flag 
of partitions $\big( \lambda^{(t)} \big)_{\s n \geq t \geq 0}$
a standard tableau $T$ with content ${\bf q}$ obtained by
labeling the box of $\lambda$ which is deleted from $\lambda^{(t)}$
in order to get $\lambda^{(t-1)}$ by $r_t$.
For example when $\lambda = (3,2,1)$ 
and ${\bf q}= (1,1,0,1,1,1,0,1)$
the ${\bf q}$-step flag of partitions (depicted here in terms of 
shapes)

\[ \emptyset \subset \yng(1) \subset \yng(1,1) \subset \yng(1,1)
\subset \yng(2,1) \subset \yng(2,1,1) \subset \yng(3,1,1) \subset
\yng(3,1,1) \subset \yng(3,2,1) \]

\bigskip
\noindent
corresponds to 

\[ \young(146,28,5) \]

\bigskip
\noindent
The correspondence between ${\bf q}$-step flags of partitions in $\lambda$
and standard tableaux of shape $\lambda$ 
and content ${\bf q}$ is clearly bijective.

\begin{Def}
Let $\lambda$ be a partition of $n$ and let 
let $i=0, 1$ be a choice of parity.
The {\bf $i$-parity} of a standard tableau $T$ 
of shape $\lambda$ is by definition the bit string
${\bf d_{\s T}} = (d_1, \dots, d_n)$ in $\{0,1\}^n$
where $d_t$ equals 
the $i$-parity of the box labeled by $t$ in $T$.
Let $\super^{(i)} \big(\lambda; {\bf d}\big)$
denote the set of all such tableaux.  
\end{Def}

\begin{Def}
Let $\lambda$ be a partition, 
let $i=0,1$ be a choice
of parity, and let ${\bf j} = (j_1, \dots, j_k)$
be a non-negative integer $k$-tuple in $\Bbb{Z}^k_{\s \geq 0}$
such that $j_1 + \dots + j_k = |\lambda|$.
A {\bf chess tableau} $T$
of shape $\lambda$, parity $i$, 
and content ${\bf j}$ is 
a semi-standard tableau of shape $\lambda$
and content ${\bf j}$ 
with the added constraint 
that the $i$-parity of each box in $T$
equals the parity of the index $t$ 
labeling that box.
Let $\chess^{(i)}_{\s \bf j}(\lambda)$ 
denote the set of
all chess tableaux of shape $\lambda$, parity $i$, and
content ${\bf j}$. Let
$\chess^{(i)}(\lambda)$ denote the disjoint union

\[ \chess^{(i)}(\lambda) \ := \
\bigsqcup_{\D \ \, {\bf j} \in \Bbb{Z}^k_{\s \geq 0}} 
\chess^{(i)}_{\s \bf j}(\lambda) \]

\end{Def}

\begin{Rem}
The parity condition for chess tableaux forces 
the row and column entries to increase strictly.
\end{Rem}

\bigskip
\noindent
Chess tableaux seem to have been
considered first by Chow, Erisson, and Fan in
\cite{chess}. Among other things, they independently 
interpret chess tableaux
in terms of {\it rat races} which are essentially
the non-crossing path configurations which we
consider in section 5.

\begin{Prop}
Let $\lambda$ be a partition 
of $n$, let $i =0,1$ be a choice
of parity, let ${\bf d}$ be a bit string in $\{0,1\}^n$,
let ${\bf i}=(i_1, \dots, i_n)$ be an alternating
bit string in $\{0,1\}^n$, and let ${\bf j} = (j_1, \dots, j_k)$ be a non-negative
integer $k$-tuple in $\Bbb{Z}^k_{\s \geq 0}$
such that ${\bf i}^{\bf j} = {\bf d}$ then

\[ \bigg| \super^{(i)} \big(\lambda; {\bf d} \big) \bigg| 
\ = \ j_1! \cdots j_k! \
\bigg| \chess^{(i^*)}_{\s \bf j} \big( \lambda \big)               
\, \bigg| \]

\bigskip
\noindent
where $i^*$ is the parity of $i + i_1 + 1$.

\end{Prop}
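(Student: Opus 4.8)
\bigskip
\noindent
\textbf{Proof proposal.} The plan is to construct a surjection from $\super^{(i)}(\lambda; {\bf d})$ onto $\chess^{(i^*)}_{\s \bf j}(\lambda)$ every fibre of which has cardinality $j_1! \cdots j_k!$. Throughout, write $n = |\lambda| = j_1 + \dots + j_k$, let $i_s$ denote the $s$-th entry of the alternating string ${\bf i}$ (so consecutive $i_s$ differ), and split the label set $[1 \dots n]$ into consecutive blocks $B_1, \dots, B_k$ with $|B_s| = j_s$, so that $\sigma(t) = s$ exactly when $t \in B_s$. By hypothesis ${\bf i}^{\bf j} = {\bf d}$, i.e.\ $d_t = i_{\sigma(t)}$, so ${\bf d}$ is constant equal to $i_s$ on each block $B_s$. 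I will think of a standard tableau $T$ of shape $\lambda$ as the maximal chain $\emptyset = \lambda^{(0)} \subset \cdots \subset \lambda^{(n)} = \lambda$ recording where the labels are inserted, and I will coarsen it to $\nu^{(s)} := \lambda^{(j_1 + \dots + j_s)}$, so that the skew shape $\nu^{(s)}/\nu^{(s-1)}$ is exactly the set of boxes of $T$ carrying labels in $B_s$.

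\bigskip
\noindent
The combinatorial heart of the argument is an \emph{antichain lemma}: if $T \in \super^{(i)}(\lambda; {\bf d})$ then every box of $\nu^{(s)}/\nu^{(s-1)}$ has $i$-parity $d_t = i_s$, hence all boxes of this skew shape lie on diagonals of a single parity; and a skew shape whose boxes all share one diagonal parity can contain no two boxes in a common row or column --- indeed, were a skew shape to contain two comparable boxes it would contain two edge-adjacent ones, and edge-adjacent boxes have opposite diagonal parities. From this I extract two consequences. First, refining a step $\nu^{(s-1)} \subset \nu^{(s)}$ back into a maximal chain of single-box insertions can be done in $j_s!$ ways and \emph{every} order of the $j_s$ pairwise incomparable boxes is legal, since inserting a box of an antichain never breaks the Young condition. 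Second, labelling each box of $\nu^{(s)}/\nu^{(s-1)}$ with the single symbol $s$ produces a filling of $\lambda$, of content ${\bf j}$, which increases strictly along rows and down columns --- equivalently the rule "replace every label $t$ of $T$ by $\sigma(t)$" lands in semistandard fillings.

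\bigskip
\noindent
The remaining step is the parity bookkeeping that pins the target down as $\chess^{(i^*)}$. A box carrying the symbol $s$ in the collapsed filling lies on a diagonal of parity $i_s + i$, and I must check this is the parity of $s + i^*$; since $i^*$ is the parity of $i + i_1 + 1$ this comes down to $i_s \equiv i_1 + s + 1 \pmod 2$, which holds because ${\bf i}$ is alternating with initial entry $i_1$. Hence the collapsed filling is a chess tableau of shape $\lambda$, parity $i^*$, content ${\bf j}$. Reversing the bookkeeping: a chess tableau in $\chess^{(i^*)}_{\s \bf j}(\lambda)$ has, by its defining parity constraint, the boxes carrying $s$ forming a skew shape $\nu^{(s)}/\nu^{(s-1)}$ supported on one diagonal parity, hence an antichain; refining all these steps into a maximal chain reproduces, by the antichain lemma, precisely the set of standard tableaux collapsing to it --- a set that has $i$-parity ${\bf d}$ and cardinality $j_1! \cdots j_k!$. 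Blocks with $j_s = 0$ are harmless, contributing the factor $0! = 1$ while the symbol $s$ simply does not occur. Summing over all chess tableaux yields the stated identity.

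\bigskip
\noindent
The step I expect to be the real obstacle is the antichain lemma together with the claim that \emph{arbitrary} within-block reorderings all produce bona fide standard tableaux of the prescribed $i$-parity: this is exactly what makes the collapse/refine correspondence $(j_1! \cdots j_k!)$-to-one rather than merely onto. The parity arithmetic is routine once the alternation of ${\bf i}$ is used correctly, but it must be handled with care, as it is precisely where the shift $i^* =$ parity of $i + i_1 + 1$ is forced.
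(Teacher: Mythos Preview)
Your proposal is correct and follows essentially the same route as the paper: collapse each label $t$ to $\sigma(t)$, verify the parity bookkeeping lands the result in $\chess^{(i^*)}_{\bf j}(\lambda)$, and count the $j_1!\cdots j_k!$ refinements back to standard tableaux. Your ``antichain lemma'' makes explicit what the paper handles more tersely via its Remark~1 (the parity constraint forces row and column strictness), but the map, the surjectivity argument, and the fibre count are the same.
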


\begin{proof}
Recall the definition of $\sigma(t)$ given in
equation (1) of the introduction.
Note that $\sigma(t) \in {\bf [j]}$ whenever $n \geq t \geq 1$.
Clearly $\sigma(t) \geq \sigma(t')$ whenever $t > t'$.
If $T$ be a tableau in 
$\super^{(i)} \big(\lambda; {\bf d} \big)$
then the filling $\Sigma(T)$ of $\lambda$ obtained by 
replacing each index $t$ in $T$ by 
$\sigma(t)$ is weakly increasing in both rows and columns. 
The parity of $\sigma(t)$ equals the parity of $d_t + i_1 + 1$
whenever $n \geq t \geq 1$. Consequently
the $i$-parity of the box labeled $t$ in $T$
equals $d_t$ if and only if the $i^*$-parity of the
box labeled $t$ in $T$ equals the parity of $\sigma(t)$.
By assumption the $i$-parity of the box 
in $T$ labeled $t$ equals $d_t$ whenever 
$n \geq t \geq 1$ therefore the $i^*$-parity
of the box labeled $\sigma(t)$ in $\Sigma(T)$ equals the parity
of $\sigma(t)$; hence $\Sigma(T) \in 
\chess^{(i^*)}\big(\lambda\big)$. Moreover for 
each $s \in [1 \dots k]$ 
the cardinality of $\sigma^{\s -1}(s)$ is exactly $j_s$ 
and so $\Sigma(T) \in 
\chess^{(i^*)}_{\s \bf j} \big(\lambda\big)$.

\bigskip
\noindent
If $S$ is a chess tableau in 
$\chess^{(i^*)}_{\s \bf j} \big(\lambda\big)$
let $T$ be a filling of $\lambda$ obtained
by replacing each label $s$ occurring in $S$ by
some choice of element $t$ in $\sigma^{\s -1}(s)$
with the rule that pre-images in 
$\sigma^{\s -1}(s)$ are not reused once they
are selected.
As indicated in the remark above, any chess tableau
is in fact row and column strict and since
$\sigma(t) \geq \sigma(t')$ whenever $t > t'$
it follows that $T$ will be a standard tableau.
Moreover the $i$-parity of the box labeled 
$t$ will be $d_t$. Consequently
the mapping $\Sigma: \super^{(i)} \big(\lambda; {\bf d} \big)
\longrightarrow 
\chess^{(i^*)}_{\s \bf j}\big(\lambda\big)$
is surjective. 

\bigskip
\noindent
Since the construction described in the
surjectivity argument prescribes that each
index in $s \in {\bf [j]}$ is chosen once
and since there are $j_s$-choices to be made
it follows that are exactly $j_s!$ possibilities for each
$s$. The proposition now follows.

\end{proof}

\bigskip
\noindent
For a standard tableau $T$ of shape
$\lambda$ and an index $t$ let
$\row_t$ and $\col_t$ respectively 
denote the row and column positions of the
box labeled by $t$ in $T$

\begin{Def}
Let $\lambda$ be a partition of $n$ and 
let ${\bf d} = (d_1, \dots, d_n)$ be a 
bit string in $\{0,1\}^n$.
A pair of indices $\{s, t\}$ 
is called a {\bf ${\bf d}$-transposition pair} for a standard 
tableau $T$ of shape $\lambda$
if $d_s = d_t$ and
the tableau $T'$ obtained
by exchanging the positions of $s$ and $t$ 
remains standard. A ${\bf d}$-transposition pair $\{s,t\}$ is said 
to be {\bf grounded} if $s< t$ and 
$\row_s < \row_t$
and $\col_t< \col_s$. Define the {\bf ground state}
$\gr\big(T\big)$ of a standard tableau $T$ as
the total number of grounded ${\bf d}$-transposition pairs of $T$.

\end{Def}

\bigskip
\section{Shape Modules over $\Lambda$:}

\bigskip
\bigskip
\noindent
We may always regard a finite dimensional left $\Lambda$-module $M$
as a module over the polynomial
ring $\Bbb{C}\big[ \delta \big]$ whereby $\delta := 
\alpha + \beta $. If in addition $M$ is
nilpotent as a $\Lambda$-module then its isomorphism type when viewed as a 
$\Bbb{C}\big[ \delta \big]$-module is determined by the partition 
$\lambda$ of $\dim M$ which encodes the Jordan type of 
$\delta = \alpha + \beta$ considered as a nilpotent endomorphism
on $M$; we call $\lambda$ the {\it $\Bbb{C}\big[\delta\big]$-partition
type} of $M$.
In this case we 
can visualize the $\Bbb{C}\big[ \delta \big]$-module structure
on $M$ using the shape of $\lambda$: Each
box in the shape corresponds to a basis vector in $M$
and each row corresponds to an 
indecomposable $\Bbb{C}\big[\delta\big]$-summand of $M$
--- with the convention that the action of $\delta$ 
is depicted as going from right to left in each row.
For example the shape and corresponding $\Bbb{C}\big[\delta\big]$-module
associated to the partition $\lambda = (4,2)$ are

\[  \lambda = \yng(4,2)
\quad \quad \xymatrix@-4mm{ 
v_{\s 0,0} 
& \ar[l]_{\s \delta} v_{\s 0,1} 
& \ar[l]_{\s \delta} v_{\s 0,2}
& \ar[l]_{\s \delta} v_{\s 0,3} \\ 
v_{\s 1,0} 
& \ar[l]_{\s \delta} v_{\s 1,1}
& 
& } \]

\bigskip
\noindent
where $v_{s,t}$ is the basis vector corresponding to the 
box in located in row $s$ and column $t$ of the shape 
of $\lambda$ --- with the proviso that the top row 
and left most column
of $\lambda$ both have coordinates equal to $0$. 

\bigskip
\noindent 
It is well 
known (e.g. see \cite{Macdonald} chapter II section 2)
that if $M$ is a nilpotent 
$\Bbb{C}\big[ \delta \big]$-module and $N$
is a proper $\Bbb{C}\big[\delta\big]$-submodule
then $\mu \subset \lambda$ where $\mu$ and 
$\lambda$ are the $\Bbb{C}\big[\delta\big]$-partition types of 
$N$ and $M$ respectively.

\bigskip
\noindent
For a finite dimensional left $\Lambda$-module $M$
of dimension $\dim M = n$ let 
$\mathcal{F}^{\s \delta}\big(M\big)$ denote
the variety of all $\Bbb{C}\big[ \delta \big]$-composition
series, i.e. complete flags $M_n \supset \cdots
\supset M_0$ with $M_n = M$ and $M_0 = \{ 0 \}$
such that $M_{t-1}$ is a maximal 
$\Bbb{C}\big[\delta\big]$-submodule of 
$M_t$ whenever $n \geq t \geq 1$. 

\bigskip
\noindent
If $M$ is a nilpotent $\Bbb{C}\big[\delta\big]$-module of
dimension $\dim M  =n$ and $\Bbb{C}\big[\delta\big]$-partition
type $\lambda$ then any
$\Bbb{C}\big[\delta\big]$-composition series 
$M_n \supset \cdots \supset M_0$ in $M$ gives rise by 
the remarks made earlier to a flag of
partitions $\lambda^{(n)} \supset \cdots \supset \lambda^{(0)}$
in $\lambda$ where $\lambda^{(t)}$ is the $\Bbb{C}\big[\delta\big]$-partition 
type of $M_t$. The standard tableau $T$ 
of shape $\lambda$ which records this flag of partitions
will be called the 
$\Bbb{C}\big[\delta\big]$-{\it tableau type}
of the $\Bbb{C}\big[\delta\big]$-composition series.

\begin{Def}
Let $M$ be a nilpotent left $\Lambda$-module of dimension
$\dim M = n$ and 
$\Bbb{C}\big[\delta\big]$-partition type $\lambda$. For a 
standard tableau $T$ of shape $\lambda$ let $\Omega^{\s \delta}_{\s T}
\big(M\big)$ denote the collection of all 
$\Bbb{C}\big[\delta\big]$-composition series of 
$\Bbb{C}\big[\delta\big]$-tableau type $T$ in
$\mathcal{F}^{\s \delta}\big(M\big)$. Set 
$\Omega^{\s \Lambda}_{\s T, {\bf d}}\big(M\big) := 
\Omega^{\s \delta}_{\s T}\big(M\big) \cap
\mathcal{F}^{\s \Lambda}_{\s \bf d}\big(M\big)$
for ${\bf d} \in \{0,1\}^n$.

\end{Def}

\bigskip
\noindent
Macdonald relates in \cite{Macdonald} that 
Spaltenstein proved in \cite{Spaltenstein} that 
$\Omega^{\s \delta}_{\s T}\big(M\big)$ is a smooth
irreducible locally closed subvariety of $\mathcal{F}^{\s \delta}
\big(M\big)$; moreover $\Omega^{\s \delta}_{\s T}\big(M\big)$
is a disjoint union of subvarieties each of which is
isomorphic to an affine space.

\begin{lemma} 
\[ \chi \Big( \mathcal{F}^{\s \Lambda}_{\s \bf d}\big(M\big) \Big)
\ = \ \sum_{T \in \Tab(\lambda)} \chi \Big( \,
\Omega^{\s \Lambda}_{\s T, \bf d}\big(M\big) \,
\Big) \]

\end{lemma}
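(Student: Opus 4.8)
The plan is to use the decomposition of $\mathcal{F}^{\s \delta}\big(M\big)$ into the Spaltenstein strata $\Omega^{\s \delta}_{\s T}\big(M\big)$ indexed by standard tableaux $T \in \Tab(\lambda)$. Since every $\Bbb{C}\big[\delta\big]$-composition series has a well-defined $\Bbb{C}\big[\delta\big]$-tableau type, these strata partition $\mathcal{F}^{\s \delta}\big(M\big)$; and because $\mathcal{F}^{\s \Lambda}_{\s \bf d}\big(M\big)$ is a closed subvariety of $\mathcal{F}^{\s \delta}\big(M\big)$ (a $\Lambda$-composition series is in particular a $\Bbb{C}\big[\delta\big]$-composition series, subject to the additional constraint that $M_t/M_{t-1} \simeq S_{d_t}$), intersecting with the strata gives a partition
\[ \mathcal{F}^{\s \Lambda}_{\s \bf d}\big(M\big) \ = \ \bigsqcup_{T \in \Tab(\lambda)} \Omega^{\s \Lambda}_{\s T, \bf d}\big(M\big) \]
into locally closed subvarieties.

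The next step is to invoke additivity of the Euler characteristic $\chi$ (for cohomology with compact support) over a finite partition into locally closed pieces: $\chi(X) = \sum_i \chi(X_i)$ whenever $X = \bigsqcup_i X_i$ with each $X_i$ locally closed. This is the standard property of compactly supported Euler characteristic cited via \cite{toric}. Applying it to the partition above immediately yields the claimed identity
\[ \chi \Big( \mathcal{F}^{\s \Lambda}_{\s \bf d}\big(M\big) \Big)
\ = \ \sum_{T \in \Tab(\lambda)} \chi \Big( \,
\Omega^{\s \Lambda}_{\s T, \bf d}\big(M\big) \,
\Big). \]

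The only points requiring care — and the main obstacle — are the well-definedness and finiteness issues needed to make the additivity argument legitimate. I would need to check that $\Omega^{\s \Lambda}_{\s T, \bf d}\big(M\big)$ is genuinely locally closed: it is cut out inside the constructible set $\Omega^{\s \delta}_{\s T}\big(M\big)$ (locally closed by Spaltenstein, per the cited remark) by the further closed condition defining $\mathcal{F}^{\s \Lambda}_{\s \bf d}\big(M\big)$, so the intersection is locally closed in $\mathcal{F}^{\s \delta}\big(M\big)$. One also needs that only finitely many $T \in \Tab(\lambda)$ contribute, which is automatic since $\Tab(\lambda)$ is finite. Strictly speaking the strata $\Omega^{\s \delta}_{\s T}\big(M\big)$ need not be closed, so one should phrase the additivity as the finite-partition version rather than the inclusion-exclusion version; alternatively, order the tableaux compatibly with the closure partial order on Spaltenstein strata and peel them off one at a time, using additivity along a closed subvariety and its open complement at each stage. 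Either way the combinatorial bookkeeping is routine once the local-closedness of each piece is in hand.
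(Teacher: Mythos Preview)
Your proposal is correct and follows essentially the same argument as the paper: decompose $\mathcal{F}^{\Lambda}_{\bf d}(M)$ into the locally closed pieces $\Omega^{\Lambda}_{T,{\bf d}}(M)$ inherited from the Spaltenstein stratification and apply additivity of the compactly supported Euler characteristic. The paper's proof is terser, but your additional remarks on local closedness and finiteness are exactly the points it is implicitly invoking.
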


\begin{proof}
The subset $\Omega^{\s \Lambda}_{\s T,{\bf d}}\big(M\big)$ is
a locally closed subvariety of $\mathcal{F}^{\s \Lambda}_{\s \bf d}
\big(M\big)$ owing to the fact that 
$\Omega^{\s \delta}_{\s T}\big(M\big)$ is locally closed in
$\mathcal{F}^{\s \delta}\big(M\big)$. 
The lemma must hold since Euler characteristic
$\chi$ (for cohomology with compact support;
see \cite{toric}) is additive over disjoint unions of locally closed 
subvarieties and because

\[ \mathcal{F}^{\s \Lambda}_{\s \bf d}\big(M\big)
\ = \ \bigsqcup_{T \in \Tab(\lambda)} 
\Omega^{\s \Lambda}_{\s T, \bf d}\big(M\big) \]

\end{proof}

\begin{Def}
Let $i= 0,1$ be a choice of parity and let $\lambda = (\lambda_0 \geq \lambda_1 \geq \dots)$
and $\mu = \big(\mu_0 \geq \mu_1 \geq \dots)$ be two partitions
such that $\mu \subset \lambda$.
The {\bf skew-shape module} $M$ of {\bf skew-shape} $\lambda / \mu$ and {\bf parity $i$}
is the nilpotent left $\Lambda$-module of dimension $|\lambda| - 
|\mu|$ with basis 

\[ \Big\{ v_{s,t} \, \Big| \, \mu_s \leq t < \lambda_s \, \Big\} \]

\bigskip
\noindent
for which the $\alpha$, $\beta$, $\alpha^*$, and $\beta^*$ actions are
given by

\[ \begin{array}{ll}
\alpha v_{s,t} = \left\{ \begin{array}{ll} 
v_{s-1,t} &\text{$s+t+i$ even} \\
0 &\text{otherwise} \end{array} \right.
&\alpha^*v_{s,t} = \left\{ \begin{array}{ll}
v_{s,t-1} &\text{$s+t+i$ odd} \\
0 &\text{otherwise} \end{array} \right. \\ \\
\beta v_{s,t} = \left\{ \begin{array}{ll}
v_{s-1,t} &\text{$s+t+i$ odd} \\
0 &\text{otherwise} \end{array} \right.
&\beta^*v_{s,t} = \left\{ \begin{array}{ll}
v_{s,t-1} &\text{$s+t+i$ even} \\
0 &\text{otherwise} \end{array} \right.
\end{array} \]

\bigskip
\noindent
and zero otherwise.
If $\mu = \emptyset$ we call $M$ instead a {\bf shape module}
of shape $\lambda$ and parity $i$. In addition the
shape module associated to $\lambda = \emptyset$
is by definition the zero module.

\end{Def}

\bigskip
\noindent
This module can be depicted by a grid of north and east
pointing arrows, each representing
the action of either $\alpha$, $\beta$, $\alpha^*$, and $\beta^*$,
which is subordinate to the skew-shape $\lambda / \mu$. For instance the case of $i=0$, $\mu = (2,1)$,
and $\lambda = (4,3,2,2,1)$ is shown here together with
the corresponding skew-shape:

\[  \Yvcentermath1  
\young(::\hfil \hfil,:\hfil \hfil,\hfil \hfil,\hfil \hfil,\hfil)
\qquad \qquad \xymatrix@-4mm{ 
& 
& v_{\s 0,2} 
& v_{\s 0,3} \ar[l]_{\beta} \\ 
& v_{\s 1,1} 
& v_{\s 1,2} \ar[l]_{\beta} \ar[u]_{\alpha^*}
& \\
v_{\s 2,0}
& v_{\s 2,1} \ar[u]_{\alpha^*} \ar[l]_{\beta} 
&
& \\
v_{\s 3,0} \ar[u]^{\alpha^*}  
& v_{\s 3,1} \ar[l]^{\alpha} \ar[u]_{\beta^*}
& 
& \\
v_{\s 4,0} \ar[u]^{\beta^*}
& 
& 
& } \]

\begin{Rem}
The preprojective relations $\alpha^*\alpha - \beta \beta^* = 0$ 
and $\beta^*\beta - \alpha \alpha^* = 0$ are clearly
valid for any skew-shape module and any parity.
Clearly the $\Bbb{C}\big[\delta\big]$-partition type of the shape
module $M$ of shape $\lambda$ of any parity $i$ is $\lambda$.
\end{Rem}

\section{Evaluation of Euler Characteristics:}

\bigskip
\noindent
The techniques used in this section are adopted
from those described by Geiss-Leclerc-Schr\"oer in 
sections 2 and 3 of their publication \cite{GLS}. 
In addition we have tried to reconcile our notation
with theirs as much as possible.

\bigskip
\noindent
Let $M$ be a shape module of shape $\lambda$,
parity $i$, and dimension $\dim M = |\lambda| = n$.
Assume also that $\lambda$ has $k$ parts. Let 
$P$ denote the $\Lambda$-submodule of $M$ spanned by
the vectors $\big\{ v_{t,0} \, | \, 0 \leq t < k \big\}$
associated to the first column of $\lambda$.
Let $Q$ denote the span of the remaining basis vectors 
$\big\{ v_{s,t} \, | \, 1 \leq t < \lambda_s \big\}$.
Evidently both $P$ and the quotient module $M/P$ are shape
modules of parity $i$ and $1-i$ and shape $1^k$ and 
$\overline{\lambda}$ respectively where $\overline{\lambda}_t
:= \lambda_t - 1$ provided $\lambda_t > 0$ and zero
otherwise. 
Note $Q$ is a $\mathcal{A}$-submodule of $M$
where $\mathcal{A}$ is the $\Bbb{C}$-subalgebra of $\Lambda$
generated by $\alpha^*$, $\beta^*$, $e_0$, and $e_1$;
moreover $Q$ and $M/P$ as isomorphic 
when regarded as 
$\mathcal{A}$-modules.
Let $q: M \longrightarrow M/P$ and $\pr_{\s Q}: M \longrightarrow
Q$ denote the quotient and projection maps respectively.

\bigskip
\noindent
Let ${\bf c}$ be a bit string in $\{0,1\}^n_k$,
let ${\bf \overline{c}}$ denote the bit string
$(1 - c_1, \dots, 1-c_n)$ in $\{0,1\}^n_{n-k}$, and let
$\{ \bf d \}$ be an arbitrary bit string in $\{0,1\}^n$.
Let $\mathcal{F}^{\s \Lambda}_{\s \bf d} \big(M; {\bf c}\big)$ denote
the subset of $\mathcal{F}^{\s \Lambda}_{\s \bf d} \big(M\big)$
consisting of all $\Lambda$-composition
series $\big(M_t\big)_{\s n \geq t \geq 0}$ such that

\[ \dim {M_t \cap P \over { M_{t-1} \cap P}} \ = \ c_t \]

\bigskip
\noindent
whenever $n \geq t \geq 1$. Clearly this 
condition is equivalent to 

\[ \dim { \pr_{\s Q} M_t \over { \pr_{\s Q} M_{i-t} }  } \ = \ 1 - c_t \]

\bigskip
\noindent
whenever $n \geq t \geq 1$. An easy application of the
Zassenhaus butterfly lemma for vector spaces shows that

\[ \mathcal{F}^{\s \Lambda}_{\s \bf d}\big(M\big) \ = \
\bigsqcup_{{\bf c} \in \{0,1\}_k^n} \mathcal{F}^{\s \Lambda}_{\s \bf d}
\big(M; {\bf c}\big) \]

\begin{lemma} 
Let $M$ be a finite dimensional nilpotent left
$\Lambda$-module of $\Bbb{C}\big[\delta\big]$-partition
type $\lambda$.
For bit strings ${\bf c} \in \{0,1\}^n_k$ and 
${\bf d} \in \{0,1\}^n$

\[ \mathcal{F}^{\s \Lambda}_{\s \bf d} \big(M;{\bf c}\big) \ = \
\bigsqcup_{T} \ \Omega^{\s \Lambda
}_{\s T, \bf d} \big(M\big)\]

\bigskip
\noindent
where the union is taken over all standard tableaux $T$
of shape $\lambda$ whose first column content is 
${\bf [c]}$.

\end{lemma}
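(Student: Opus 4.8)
The plan is to verify the claimed decomposition by unwinding all the definitions involved. Recall from the discussion preceding Definition 5 that each $\Lambda$-composition series $\big(M_t\big)_{n \geq t \geq 0}$ is in particular a $\Bbb{C}\big[\delta\big]$-composition series, and so it has a well-defined $\Bbb{C}\big[\delta\big]$-tableau type $T \in \Tab(\lambda)$; by the definition of $\Omega^{\s\delta}_{\s T}(M)$ this gives the disjoint decomposition $\mathcal{F}^{\s\delta}(M) = \bigsqcup_{T} \Omega^{\s\delta}_{\s T}(M)$, and intersecting with $\mathcal{F}^{\s\Lambda}_{\s\bf d}(M)$ produces the partition of $\mathcal{F}^{\s\Lambda}_{\s\bf d}(M)$ into the sets $\Omega^{\s\Lambda}_{\s T,\bf d}(M)$ already used in Lemma 1. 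The point of the present lemma is therefore only to identify \emph{which} tableaux $T$ contribute, once one further restricts to the stratum $\mathcal{F}^{\s\Lambda}_{\s\bf d}(M;{\bf c})$ with its prescribed behaviour relative to the submodule $P$.

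First I would fix a $\Lambda$-composition series $\big(M_t\big)$ lying in $\Omega^{\s\Lambda}_{\s T,\bf d}(M)$ and analyze the numbers $c_t := \dim\big( (M_t \cap P)/(M_{t-1}\cap P)\big)$. Each $c_t$ is $0$ or $1$ since $M_t/M_{t-1} \simeq S_{d_t}$ is simple, so the filtration $\big(M_t \cap P\big)_t$ of $P$ jumps exactly at the indices $t$ with $c_t = 1$; as $\dim P = k$, the indicator set ${\bf [c]}$ has cardinality $k$, i.e. ${\bf c} \in \{0,1\}^n_k$. The key geometric input is that $P$ is the $\Bbb{C}\big[\delta\big]$-submodule of $M$ corresponding to the first column of $\lambda$ (this uses Remark 4 and the explicit shape-module structure): for a $\Bbb{C}\big[\delta\big]$-composition series of tableau type $T$, the partition $\lambda^{(t)}$ is obtained from $\lambda^{(t-1)}$ by adjoining the box labelled $t$ in $T$, and the submodule $M_t$ meets $P$ in a larger subspace than $M_{t-1}$ \emph{precisely} when that new box lies in column $0$. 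Hence $c_t = 1$ iff the box labelled $t$ in $T$ sits in the first column, i.e. iff the first-column content of $T$ equals ${\bf [c]}$. This shows $\Omega^{\s\Lambda}_{\s T,\bf d}(M) \subseteq \mathcal{F}^{\s\Lambda}_{\s\bf d}(M;{\bf c})$ when $T$ has first-column content ${\bf [c]}$, and that $\Omega^{\s\Lambda}_{\s T,\bf d}(M)$ is disjoint from that stratum otherwise. The equivalence with the dual condition $\dim\big(\pr_{\s Q}M_t/\pr_{\s Q}M_{t-1}\big) = 1-c_t$ is exactly the Zassenhaus/butterfly computation already invoked in the text, applied to the short exact sequence $0 \to P \to M \to M/P \to 0$ together with the $\mathcal{A}$-module identification $Q \simeq M/P$.

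Assembling these observations gives both inclusions: $\mathcal{F}^{\s\Lambda}_{\s\bf d}(M;{\bf c}) = \bigsqcup_T \Omega^{\s\Lambda}_{\s T,\bf d}(M)$ over $T$ with first-column content ${\bf [c]}$, the union being disjoint because the $\Omega^{\s\Lambda}_{\s T,\bf d}(M)$ are already pairwise disjoint inside $\mathcal{F}^{\s\Lambda}_{\s\bf d}(M)$. The main obstacle I anticipate is the careful bookkeeping in the claim that a jump of the filtration $M_t \cap P$ detects a first-column box: one must argue that a $\Bbb{C}\big[\delta\big]$-submodule of $M$ whose $\Bbb{C}\big[\delta\big]$-partition type is $\lambda^{(t)}$ intersects $P$ in a subspace of dimension equal to the number of first-column boxes of $\lambda^{(t)}$, which requires knowing that $P$ is \emph{the} sub-$\Bbb{C}\big[\delta\big]$-module supported on column $0$ and that every such submodule is "column-compatible" with $P$. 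This is a standard fact about Jordan-type filtrations (cf. the Spaltenstein/Macdonald material cited in section 2), but it is the step where the shape-module structure is genuinely used rather than just formal nonsense about composition series.
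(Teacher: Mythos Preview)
Your proposal is correct and follows essentially the same line as the paper's proof: both reduce the lemma to the observation that for a composition series of tableau type $T$, the index $t$ labels a first-column box of $T$ if and only if $\dim(M_t\cap P)=1+\dim(M_{t-1}\cap P)$, i.e.\ $c_t=1$.

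One small sharpening: the step you flag as the ``main obstacle'' is handled more directly in the paper, and does \emph{not} in fact require the shape-module structure. Since $P$ coincides with the $\Bbb{C}[\delta]$-socle $\ker\delta$ of $M$, for any $\Bbb{C}[\delta]$-submodule $N\subseteq M$ one has $N\cap P=\ker(\delta|_N)$, whose dimension is the number of Jordan blocks of $\delta|_N$, i.e.\ the number of parts of the partition type of $N$. Thus $\dim(M_t\cap P)$ equals the number of rows of $\lambda^{(t)}$ for \emph{any} nilpotent $\Lambda$-module $M$, which is exactly how the paper phrases it; no ``column-compatibility'' argument or appeal to the specific basis of a shape module is needed.
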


\begin{proof}
Note first that if $\big(M_t\big)_{\s n \geq t \geq 0}$ 
is a $\Lambda$-composition series
in $M$ then $\dim M_t \cap P$ equals the number of 
parts of the partition $\lambda^{(t)}$ associated to $M_t$
and hence equal to the number of rows in the
shape of $\lambda^{(t)}$. 
Recall that box is labeled $t$ 
in the standard tableau $T$ 
associated to $\big(M_t\big)_{\s n \geq t \geq 0}$
if and only if the shape of $\lambda^{(t)}$ contains the box
while the shape of $\lambda^{(t-1)}$ does not. 
Consequently a box in the first column of $T$ is
labeled $t$ if and only if the number of rows in
the shape of $\lambda^{(t)}$ is one more than
the number of rows in the shape of $\lambda^{(t-1)}$.
Equivalently a box in the first column of $T$
is labeled $t$ if and only if $\dim M_t \cap P
= 1 + \dim M_{t-1} \cap P$. Therefor $t$ labels
a box in the first column of $T$ if and only if
$c_t = 1$.

\end{proof}

\bigskip
\noindent 
It follows from lemma (2) that each $\mathcal{F}^{\s \Lambda}_{\s \bf d}
\big(M; {\bf c}\big)$ is a locally closed subvariety of
$\mathcal{F}^{\s \Lambda}_{\s \bf d}\big(M\big)$.
For bit strings ${\bf c} \in \{0,1\}^n_k$ and ${\bf d} \in \{0,1\}^n$ 
let $\pi_{\s \bf c}: \mathcal{F}^{\s \Lambda}_{\s \bf d}\big(M; {\bf c}\big)
\longrightarrow \mathcal{F}^{\s \Lambda}_{\s \bf c; d}
\big(P\big) \times \mathcal{F}^{\s \Lambda}_{\s \bf \overline{c}; d }
\big(Q\big)$ be the map given by

\[ \big(M_t\big)_{\s n \geq t \geq 0} \mapsto 
\big(M_t \cap P\big)_{\s n \geq t \geq 0} \times
\big(\pr_{\s Q} \big(M_t\big) \big)_{\s n \geq t \geq 0} \]

\bigskip
\noindent
where $\mathcal{F}^{\s \Lambda}_{\s \bf c; d} \big(P\big)$ 
denotes the variety of all {\it ${\bf c}$-step
$\Lambda$-composition series}, i.e.
$P_n := P \supseteq \cdots \supseteq P_0 := \{0\}$ in $P$ such that

\[ {P_t \over {P_{t-1}}} \ \simeq \ \left\{ \begin{array}{ll}
S_{d_t} &\text{if $c_t =1$} \\
\{ 0 \}&\text{if $c_t = 0$}
\end{array} \right. \]

\bigskip
\noindent
whenever $n \geq t \geq 1$.
Bearing a slight abuse of terminology and notation 
$\mathcal{F}^{\s \Lambda}_{\s \bf \overline{c}; d} \big(Q\big)$
denotes the variety of all ${\bf \overline{c}}$-step 
flags  $Q_n := Q \supseteq \cdots \supseteq Q_0:= \{0\}$ in $Q$
such that $q\big(Q_t\big)$ is a $\Lambda$-submodule
of $M/P$ and 

\[ { q\big(Q_t\big) \over{q\big(Q_{t-1}\big)}} \ \simeq \ 
\left\{ \begin{array}{ll} S_{d_t} &\text{if $c_t = 0$} \\
\{0\} &\text{if $c_t =1$}
\end{array} \right. \]

\bigskip
\noindent
whenever $n \geq t \geq 1$.

\bigskip
\noindent
Express the indicator sets of ${\bf c}$ and ${\bf \overline{c}}$ as
${\bf [c]} = \big\{r_1 < \dots < r_k \big\}$ and
${\bf [\overline{c}] } = \big\{ s_1 < \dots < s_{n-k} \big\}$
respectively; also set $r_0 = s_0 = 0$.
Set ${\bf e} = (e_1, \dots, e_k)$ with $e_t = d_{r_t}$
and ${\bf f} = (f_1, \dots, f_{n-k})$ with $f_t = d_{s_t}$.
Clearly $\mathcal{F}^{\s \Lambda}_{\s \bf c, d} 
\big(P\big)$ and $\mathcal{F}^{\s \Lambda}_{\s \bf e}
\big(P\big)$ are isomorphic as varieties, and so are 
$\mathcal{F}^{\s \Lambda}_{\s \bf \overline{c}, d} \big(Q\big)$
and $\mathcal{F}^{\s \Lambda}_{\s \bf f}\big(M/P\big)$. 
Indeed the isomorphisms $\psi_{\s P}$ and $\psi_{\s Q}$ 
in each case are given by

\[ \begin{array}{rlll} 
\mathcal{F}^{\s \Lambda}_{\s \bf c,d} \big(P\big) \ni 
&\Big(P_t\Big)_{\s n \geq t \geq 0}
&\stackrel{\psi_P}{\longmapsto} \Big(P_{r_t}\Big)_{\s k \geq t \geq 0} 
&\in \mathcal{F}^{\s \Lambda}_{\s \bf e} \big(P\big) \\ \\
\mathcal{F}^{\s \Lambda}_{\s \bf \overline{c}, d} \big(Q\big) \ni 
&\Big(Q_t\Big)_{\s n \geq t \geq 0}
&\stackrel{\psi_Q}{\longmapsto} \Big( q\big(Q_{s_t}\big) \Big)_{\s n-k \geq t \geq 0} 
&\in \mathcal{F}^{\s \Lambda}_{\s \bf f}\big(M/P\big)
\end{array} \]

\bigskip
\noindent
Any ${\bf c}$-step composition series 
$\big(P_i\big)_{\s n \geq i \geq 0}$ in 
$\mathcal{F}^{\s \Lambda}_{\s \bf c, d}\big(P\big)$
determines 
a ${\bf c}$-step flag of partitions
$1^{\s d_n} \supseteq \cdots \supseteq  1^{\s d_0}$ where $d_t = c_1 + \cdots 
+ c_t$ and $d_0 = 0$. This in turn corresponds to a
standard tableau $R$ of shape $1^k$ of
content ${\bf c}$. 
Let $\Omega^{\s \Lambda}_{\s R, \bf d}\big(P\big)$
denote the set of all ${\bf c}$-step composition series in
$\mathcal{F}^{\s \Lambda}_{\s \bf c, d}\big(P\big)$
associated to the standard tableau $R \in \Tab_{\bf c}\big(1^k\big)$.

\bigskip
\noindent
Similarly any ${\bf \overline{c}}$-step composition series
$\big(Q_t\big)_{\s n \geq t \geq 0}$ in
$\mathcal{F}^{\s \Lambda}_{\s \bf \overline{c}, d}
\big(Q\big)$ determines
a ${\bf \overline{c}}$-step flag of partitions $\overline{\lambda}^{(n)} \supseteq 
\cdots \supseteq \overline{\lambda}^{(0)}$ in $\overline{\lambda}$
where $\overline{\lambda}^{(t)}$
is the $\Bbb{C}\big[\delta\big]$-partition type
of the $\Lambda$-submodule $q\big(Q_t\big)$ of $M/P$.
Let $S$ be the standard tableau of shape $\overline{\lambda}$
and content ${\bf \overline{c}}$ corresponding to this 
${\bf \overline{c}}$-step flag.
Let $\Omega^{\s \Lambda}_{\s S, \bf d}\big(Q\big)$ denote
the set of ${\bf \overline{c}}$-step composition series in 
$\mathcal{F}^{\s \Lambda}_{\s \bf 1-c, d}\big(Q\big)$
associated to the standard tableau $S  \in \Tab_{\s \bf
\overline{c}}\big(\overline{\lambda}\big)$.

\bigskip
\noindent
Clearly 
$\psi_{\s P}\Big( \Omega^{\s \Lambda}_{\s R, \bf d}\big(P\big) \Big)
= \Omega^{\s \Lambda}_{\s \overline{R}, \bf e } \big( P \big)$ 
and $\psi_{\s Q}\Big(
\Omega^{\s \Lambda}_{\s S, \bf d}\big(Q\big) \Big)
= \Omega^{\s \Lambda}_{\s \overline{S}, \bf f}\big(M/P\big)$
where $\overline{R}$ and $\overline{S}$ 
are standard tableaux obtained from $R$ and $S$ by
replacing each index $r_t$ in $R$ by $t$ 
and each index $s_t$ in $S$ by $t$ respectively.

\begin{Prop}
Let $M$ be a shape module of shape $\lambda$
and parity $i$. Let $T$ be a standard tableau of 
shape $\lambda$ whose left-most column is labeled by
indices in ${\bf [c]}$.
Let $R \in \Tab_{\s \bf c}\big(1^k \big)$ and 
$S \in \Tab_{\s \bf \overline{c}}\big(\overline{\lambda} \, \big)$
be the pair of standard tableaux respectively obtained 
by taking the 1-st column and its complement in $T$, then

\[ \pi_{\s \bf c} \Big( \Omega^{\s \Lambda}_{\s T, \bf d}
\big(M\big) \Big) \ = \ \Omega^{\s \Lambda}_{\s R, \bf d}
\big(P\big) \times
\Omega^{\s \Lambda}_{\s S, \bf d}\big(Q\big) \]

\end{Prop}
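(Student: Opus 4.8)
The plan is to establish the set equality by a double-inclusion argument, exploiting the fact that $\pi_{\s \bf c}$ is built out of the intersection map $M_t \mapsto M_t \cap P$ and the projection map $M_t \mapsto \pr_{\s Q}(M_t)$, and that $\Omega$-membership is a statement purely about $\Bbb{C}[\delta]$-partition types. First I would fix a composition series $\big(M_t\big)_{\s n \geq t \geq 0} \in \Omega^{\s \Lambda}_{\s T, \bf d}\big(M\big)$. Since $P$ and $Q \simeq M/P$ are $\Lambda$-submodule and $\mathcal{A}$-submodule respectively, both $\big(M_t \cap P\big)_t$ and $\big(\pr_{\s Q} M_t\big)_t$ are flags of the requisite type, and by the displayed dimension conditions defining $\mathcal{F}^{\s \Lambda}_{\s \bf d}\big(M; {\bf c}\big)$ they are a ${\bf c}$-step and ${\bf \overline{c}}$-step composition series respectively, so $\pi_{\s \bf c}$ does land in the claimed product. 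The content is then to identify which tableaux index the images. Here I would invoke Lemma (2): the tableau $T$ recording $\big(M_t\big)_t$ has first-column content ${\bf [c]}$, so the induced flag of partitions on the first column is exactly the ${\bf c}$-step flag $1^{d_n} \supseteq \cdots \supseteq 1^{d_0}$, whose tableau is by definition $R$; dually, the quotient flag $q\big(\pr_{\s Q} M_t\big) = q\big(M_t\big)$ has $\Bbb{C}[\delta]$-partition types obtained from those of $M_t$ by deleting the first column, which is precisely the tableau $S$. This gives the inclusion $\subseteq$.

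For the reverse inclusion, given a pair $\big(P_t\big)_t \times \big(Q_t\big)_t$ in $\Omega^{\s \Lambda}_{\s R, \bf d}\big(P\big) \times \Omega^{\s \Lambda}_{\s S, \bf d}\big(Q\big)$, I would reconstruct a composition series of $M$ by setting $M_t := P_t + \iota\big(Q_t\big)$ where $\iota: Q \hookrightarrow M$ is the inclusion of the complementary span (so $M = P \oplus \iota(Q)$ as vector spaces). One checks $M_t$ is a $\Lambda$-submodule: it is visibly closed under $\alpha^*, \beta^*, e_0, e_1$ since $Q$ is an $\mathcal{A}$-submodule and $P$ is a full $\Lambda$-submodule; for the remaining generators $\alpha, \beta$ one uses that $q\big(Q_t\big)$ being a $\Lambda$-submodule of $M/P$ forces $\alpha\big(\iota(Q_t)\big), \beta\big(\iota(Q_t)\big) \subseteq P_t + \iota(Q_t) = M_t$ after absorbing the $P$-component — this is where the compatibility of the two flags with ${\bf c}$ versus ${\bf \overline{c}}$ (they fill complementary steps) is essential, so that at each stage exactly one of $P_t, \iota(Q_t)$ grows and the dimension increments match ${\bf d}$. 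Then $M_t \cap P = P_t$ and $\pr_{\s Q} M_t = Q_t$, so $\pi_{\s \bf c}$ sends this series to the given pair; and its tableau type has first column recording $\big(P_t\big)_t$ — namely $R$ — and complement recording the partition types of $q\big(Q_t\big)$ — namely $S$ — so the reassembled series lies in $\Omega^{\s \Lambda}_{\s T, \bf d}\big(M\big)$, with $T$ the unique standard tableau of shape $\lambda$ whose first column is $R$ and whose complement is $S$.

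The main obstacle I anticipate is the $\Lambda$-submodule check for the reconstructed $M_t$ in the reverse inclusion: one must verify that the $\alpha$ and $\beta$ actions (which move a basis vector $v_{s,t}$ with $t \geq 1$ into the first column only when it lands in $v_{s-1,0}$) do not take $M_t$ outside itself, and this requires carefully tracking the interaction between the $P$-flag and the lifted $Q$-flag at each step. The key point making this work is that $q\big(Q_t\big)$ is assumed to be a genuine $\Lambda$-submodule of $M/P$, so modulo $P$ the flag $\big(\iota(Q_t)\big)_t$ is $\Lambda$-stable, and the only ambiguity is the $P$-component of $\alpha\big(v_{s,t}\big)$ and $\beta\big(v_{s,t}\big)$; one shows this component already lies in $P_t$ by comparing step counts, since $\dim(M_t \cap P)$ is pinned down by the first-column content of $T$ via Lemma (2). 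Everything else — that the two projection maps have the right targets, that $\psi_{\s P}, \psi_{\s Q}$ intertwine the $\Omega$-strata as recorded just before the proposition — is already in hand from the preceding paragraphs, so the proof reduces to this one bookkeeping verification plus the two straightforward tableau-identification steps.
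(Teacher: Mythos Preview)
Your overall strategy --- the forward inclusion from definitions, and surjectivity via the explicit preimage $M_t = P_t \oplus \iota(Q_t)$ --- is exactly the paper's. You also correctly isolate the crux of the reverse inclusion: showing that $P_t \oplus Q_t$ is stable under $\alpha$ and $\beta$ (equivalently under $\delta$), i.e.\ that the $P$-component of $\delta(Q_t)$ lands in $P_t$ rather than merely in $P$.

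But your justification of that step has a genuine gap. You argue that the $P$-component lies in $P_t$ because ``the compatibility of the two flags with ${\bf c}$ versus ${\bf \overline{c}}$ (they fill complementary steps) is essential,'' and later ``by comparing step counts, since $\dim(M_t \cap P)$ is pinned down by the first-column content of $T$ via Lemma~(2).'' Neither works. The ${\bf c}/{\bf \overline{c}}$ complementarity is a pure dimension bookkeeping fact valid for \emph{any} pair $R \in \Tab_{\bf c}(1^k)$, $S \in \Tab_{\bf \overline{c}}(\overline{\lambda})$; it does not use the hypothesis that $R$ and $S$ arise from a common \emph{standard} tableau $T$, and without that hypothesis the inclusion $\delta(Q_t)\cap P \subseteq P_t$ can fail. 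Your appeal to Lemma~(2) is circular: that lemma concerns $\Lambda$-composition series of $M$, which is precisely what you are trying to establish for $(M_t)$.

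The paper closes this gap by translating standardness of $T$ into exactly the needed inequality. That $R$ and $S$ glue to a standard $T$ is equivalent to: for every $t$, the number of parts of $\overline{\lambda}^{(t)}$ (the $\Bbb{C}[\delta]$-type of $q(Q_t)$) is at most $d_t = \dim P_t$. The former equals $\dim\big(\delta(Q_t)\cap P\big)$. Now the point your sketch omits: $P$ is a shape module of shape $1^k$ and hence \emph{uniserial} as a $\Lambda$-module, so any two $\Lambda$-submodules of $P$ are comparable by inclusion. The dimension inequality therefore forces $\delta(Q_t)\cap P \subseteq P_t$, whence $\delta(P_t \oplus Q_t) \subseteq P_t \oplus Q_t$. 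Combined with the $\mathcal{A}$-stability you already noted, this gives the $\Lambda$-composition series. So the missing ingredients are (i) the explicit use of standardness of $T$ via the parts-count inequality, and (ii) uniseriality of $P$ to pass from a dimension bound to an inclusion.
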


\begin{proof}
Its is an immediate consequence of the definition
of $\Omega^{\s \Lambda}_{\s R, \bf d}\big(P\big)$
and $\Omega^{\s \Lambda}_{\s S, \bf d}\big(Q\big)$
that 

\[ \pi_{\s \bf c} \Big( \Omega^{\s \Lambda}_{\s T, \bf d}
\big(M\big) \Big) \ \subseteq \ \Omega^{\s \Lambda}_{\s R, \bf d}
\big(P\big) \times
\Omega^{\s \Lambda}_{\s S, \bf d}\big(Q\big) \]

\bigskip
\noindent
whenever $R$ and $S$ are respectively 
the left most column and its complement in a standard
tableau $T$ whose left-most column is labeled by
indices in ${\bf [c]}$.
Conversely suppose $\big(P_t\big)_{\s n \geq t \geq 0} \times
\big(Q_t\big)_{\s n \geq t \geq 0} \in 
\Omega^{\s \Lambda}_{\s R, \bf d}\big(P\big) \times
\Omega^{\s \Lambda}_{\s S, \bf d}\big(Q\big)$
where $R \in \Tab_{\s \bf c}\big(1^k\big)$ and $S \in 
\Tab_{\s \bf \overline{c}}\big(\overline{\lambda} \, \big)$.
Let $1^{\s d_n} \supseteq \cdots \supseteq
1^{\s d_0}$ and $\overline{\lambda}^{(n)} \supseteq
\cdots \supseteq \overline{\lambda}^{(0)}$ denote the corresponding
${\bf c}$-step and ${\bf \overline{c}}$-step 
flags of partitions where $d_t = c_1 + \cdots + c_t$ and
$d_0 = 0$. 
Note that $R$ and $S$ will be the left most column and complement in a
standard tableau $T$ whose left-most column is labeled by 
indices in ${\bf [c]}$ 
if and only if the number of parts of $\overline{\lambda}^{ (t)}$
is less than or equal to the number of parts of $1^{\s d_t}$
whenever $n \geq t \geq 0$. The latter condition holds

\bigskip
\indent \indent $\Longleftrightarrow$ \ $\dim \delta\big(Q_t\big) \cap P \leq
\dim P_t$ whenever $n \geq t \geq 0$

\indent \indent $\Longleftrightarrow$ \ $\delta\big(Q_t\big) \cap P \subseteq P_t$
whenever $n \geq t \geq 0$

\indent \indent $\Longleftrightarrow$ \ $\delta\Big(P_t \oplus Q_t\Big) 
\subseteq P_t \oplus Q_t$ whenever $n \geq t \geq 0$.

\bigskip
\noindent
Note that $Q_t$ must be a $\mathcal{A}$-submodule
of $M$ since $q\big(Q_t\big)$ is $\Lambda$-submodule of 
$M/P$ and because $Q$ itself is actually
a $\mathcal{A}$-submodule.  

\bigskip
\noindent
In addition $\delta\big(Q_t\big) \cap P$ contains both
$\alpha\big(Q_t\big) \cap P$ and $\beta\big(Q_t\big) \cap P$
whenever $n \geq t \geq 0$. To see this just note that 
we may express $v \in Q$ as $v = e_0 v + e_1 v$
and so $\alpha v = \delta e_0 v$ and $\beta v = \delta e_0 v$
where $e_0, e_1$ are the idempotents
of $\Lambda$.  
Consequently the third implication listed
above holds

\bigskip
\indent \indent $\Longleftrightarrow$ \ $\Big(P_t \oplus Q_t\Big)_{\s n \geq t \geq 0}$
is a $\Lambda$-composition series in $M$

\bigskip
\noindent
The proposition now follows from the observation 

\[ \pi_{\s \bf c}\Big(P_t \oplus Q_t\Big)_{\s n \geq t \geq 0} = 
\big(P_t\big)_{\s n \geq t \geq 0} \times
\big(Q_t\big)_{\s n \geq t \geq 0} \]

\end{proof}

\begin{Cor}
Let $M$ be a shape module of parity $i$
and shape $\lambda$
and let $T$ be any standard tableau of shape $\lambda$.
Either $\Omega^{\s \Lambda}_{\s T, \bf d}\big(M\big)$ is empty
or else its Euler characteristic 
$ \chi \Big( \Omega^{\s \Lambda}_{\s T,\bf d}\big(M\big) \Big)$ 
equals one. 

\end{Cor}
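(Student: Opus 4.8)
The plan is to induct on the number of columns of $\lambda$, using the map $\pi_{\mathbf c}$ and Proposition~2 to peel off the first column. For the base case $\lambda$ has at most one column: if $\lambda=\emptyset$ then $M=0$ and $\Omega^\Lambda_{T,\mathbf d}(M)$ is a single point, while if $\lambda=1^k$ then $\alpha^*$ and $\beta^*$ annihilate $M=P$, so every $\Bbb{C}[\delta]$-submodule of $M$ is automatically a $\Lambda$-submodule; since $M$ then has a unique $\Bbb{C}[\delta]$-composition series, $\mathcal F^\Lambda_{\mathbf d}(M)$ --- hence its subset $\Omega^\Lambda_{T,\mathbf d}(M)$ --- is empty or a single point.

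For the inductive step assume $\lambda$ has $k$ rows and at least two columns, let $\mathbf c\in\{0,1\}^n_k$ record the first-column content of $T$, and let $R\in\Tab_{\mathbf c}(1^k)$ and $S\in\Tab_{\overline{\mathbf c}}(\overline\lambda)$ be the first column of $T$ and its complement. By Proposition~2 the map $\pi_{\mathbf c}$ restricts to a surjection of $\Omega^\Lambda_{T,\mathbf d}(M)$ onto $\Omega^\Lambda_{R,\mathbf d}(P)\times\Omega^\Lambda_{S,\mathbf d}(Q)$, and via $\psi_P\times\psi_Q$ the target is isomorphic to $\Omega^\Lambda_{\overline R,\mathbf e}(P)\times\Omega^\Lambda_{\overline S,\mathbf f}(M/P)$, where $P$ is a shape module of shape $1^k$ and $M/P$ one of shape $\overline\lambda$ and parity $1-i$. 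Since $\overline\lambda$ has one fewer column than $\lambda$, the base case applied to $P$ and the inductive hypothesis applied to $M/P$ show this product is either empty --- in which case $\Omega^\Lambda_{T,\mathbf d}(M)=\emptyset$ and we are done --- or a single point.

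In the remaining case $\Omega^\Lambda_{T,\mathbf d}(M)$ is the single fibre of $\pi_{\mathbf c}$ over a point of the base, say over a $\mathbf c$-step composition series $(P_t)_t$ in $P$ and a flag $(Q_t)_t$ in $Q$, and it suffices to show this fibre is an affine space. Using the vector-space splitting $M=P\oplus Q$, a subspace $U\subseteq M$ with $U\cap P=P_t$ and $\pr_Q U=Q_t$ is precisely the datum of a linear map $Q_t\to P/P_t$, and every such $U$ automatically satisfies $U\cap P=P_t$ and $\pr_Q U=Q_t$; so the candidate chains $(M_t)_t$ lying over this point form the affine space $\prod_t\Hom(Q_t,P/P_t)$. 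The only surviving constraints are that the $M_t$ be nested and that each $M_t$ be a $\Lambda$-submodule of $M$ (the composition factors $M_t/M_{t-1}\simeq S_{d_t}$ then come for free from the exact sequences $0\to P_t\to M_t\to q(Q_t)\to 0$). Both constraints translate into \emph{linear} equations in the graph parameters, and the system is consistent because, by the surjectivity half of Proposition~2, the direct-sum chain $(P_t\oplus Q_t)_t$ --- i.e.\ the origin of $\prod_t\Hom(Q_t,P/P_t)$ --- lies in the fibre. A non-empty affine subspace of an affine space is an affine space, so $\chi\big(\Omega^\Lambda_{T,\mathbf d}(M)\big)=1$.

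The main obstacle is the penultimate claim: the careful verification that the nesting and $\Lambda$-submodule conditions on $(M_t)_t$ really are affine-linear --- not merely polynomial --- in the graph parameters once the first-column composition series $(P_t)_t$ and the flag $(Q_t)_t$ are fixed. This is exactly the linear-algebra bookkeeping of the Geiss--Leclerc--Schr\"oer method from sections~2 and 3 of \cite{GLS}, and it is where one uses that $Q$ is an $\mathcal A$-submodule of $M$ with each $q(Q_t)$ a $\Lambda$-submodule of $M/P$, together with $P_t/P_{t-1}\simeq S_{d_t}$; granting it, the remainder of the argument is formal.
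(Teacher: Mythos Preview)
Your overall architecture---induction on the number of columns, peeling off the first column via $\pi_{\mathbf c}$ and Proposition~2, and analysing the fibres---is exactly the paper's, and your base case and the decomposition into $R$ and $S$ are correct. But there is a genuine gap in the inductive step: you assert that the inductive hypothesis applied to $M/P$ forces $\Omega^{\Lambda}_{\overline S,\mathbf f}(M/P)$ to be ``a single point'' when non-empty. The statement you are proving says only that its Euler characteristic is one; it does not say the variety is a point, and in fact the paper's Conjecture~1 predicts it is typically a positive-dimensional affine space. Hence $\Omega^{\Lambda}_{T,\mathbf d}(M)$ is \emph{not} in general a single fibre of $\pi_{\mathbf c}$; it is the preimage of a base which, by induction, has $\chi=1$ but may have positive dimension. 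Your affine-fibre computation, even if granted, handles only one fibre and does not compute $\chi$ of the whole preimage.

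The paper closes this gap differently. It never claims the base is a point; instead it shows that \emph{every} fibre of $\pi_{\mathbf c}$ over $\Omega^{\Lambda}_{R,\mathbf d}(P)\times\Omega^{\Lambda}_{S,\mathbf d}(Q)$ is an affine space (parametrised by a single linear map $\theta:M\to P$ satisfying commutator conditions, modulo a linear equivalence, following Lemmas~3.1.1 and~3.2.2 of \cite{GLS}), hence has Euler characteristic one, and then invokes Proposition~7.4.1 of \cite{GLS} to conclude $\chi(\text{total})=\chi(\text{fibre})\cdot\chi(\text{base})=1\cdot 1$. Your argument can be repaired along these lines by showing the fibre description is uniform over the base and citing that multiplicativity result. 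Alternatively you could try to strengthen the inductive hypothesis to ``empty or an affine space'', but then to close the induction you must show the \emph{total} space is affine, which requires identifying $\pi_{\mathbf c}$ as an affine-space bundle rather than merely exhibiting one affine fibre.
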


\begin{proof}
Let $\dim M = n$ and assume $\lambda$ has $k \geq 0$ parts.

\bigskip
\noindent
The corollary is clearly valid when $\lambda = 1^k$
because the partition admits only one standard
tableau and the corresponding shape module is uniserial
--- in which case $\Omega^{\s \Lambda}_{\s T, \bf d}\big(M\big)$
is either empty or else 
is a point and hence 
its Euler characteristic equals one.
Assume now that the number of columns of $\lambda$ is $N > 1$
and hypothesize inductively that the corollary holds for all
partitions possessing strictly fewer than $N$ columns. 

\bigskip
\noindent
Let $T$ be a
standard tableau of shape $\lambda$ and let ${\bf c}$ be
the unique bit string in $\{0,1\}^n_k$ corresponding to the 
content of the 1-st column of $T$; namely $c_t = 1$
if and only if $t$ labels a box in the first column of
$T$. 

\bigskip
\noindent
By proposition 1 above $\pi_{\s \bf c}$ maps 
$\Omega^{\s \Lambda}_{\s T, \bf d}\big(M\big)$
onto $\Omega^{\s \Lambda}_{\s R, \bf d}\big(P\big)
\times \Omega^{\s \Lambda}_{\s S, \bf d}\big(Q\big)$
where $R \in \Tab_{\s \bf c}\big(1^k\big)$ and 
$S \in \Tab_{\s \bf \overline{c}}\big(\overline{\lambda})$ are 
the tableaux obtained by taking the 1-st column and its complement
in $T$. 
As mentioned before $\Omega^{\s \Lambda}_{\s R,\bf d}\big(P\Big)
\simeq \Omega^{\s \Lambda}_{\s \overline{R}, \bf e} \big(P\big)$
and $\Omega^{\s \Lambda}_{\s S, \bf d}\big(Q\big)
\simeq \Omega^{\s \Lambda}_{\s \overline{S}, \bf f}\big(M/P\big)$
where $\overline{R}$ and $\overline{S}$ are standard tableaux
of shapes $1^k$ and $\overline{\lambda}$ respectively. Since
both $P$ and $M/P$ are shape modules whose associated 
partitions have shapes with strictly less than $N$ columns 
then we may inductively conclude that both 
$\Omega^{\s \Lambda}_{\s R, \bf d}\big(P\big)$ and
$\Omega^{\s \Lambda}_{\s S, \bf d}\big(Q\big)$ 
are either empty or have Euler characteristic one.

\bigskip
\noindent
Clearly $\Omega^{\s \Lambda}_{\s T, \bf d}\big(M\big)$
is empty if and only if 
either $\Omega^{\s \Lambda}_{\s R, \bf d}\big(P\big)$ 
is empty or
$\Omega^{\s \Lambda}_{\s S, \bf d}\big(Q\big)$ is empty.
Let us suppose that both
$\Omega^{\s \Lambda}_{\s R, \bf d}\big(P\big)$ and
$\Omega^{\s \Lambda}_{\s S, \bf d}\big(Q\big)$ are
non-empty and thus have Euler characteristic equal to one.

\bigskip
\noindent
By adapting lemmas 3.1.1 and 3.2.2 of 
\cite{GLS} it follows that a composition series 
$\big(M_t\big)_{\s n \geq t \geq 0}$ inside
$\Omega^{\s \Lambda}_{\s T, \bf d}\big(M\big)$
will be a pre-image with respect to $\pi_{\s \bf c}$  
of a pair $\big(P_t\big)_{\s n \geq t \geq 0} \times
\big(Q_t\big)_{\s n \geq t \geq 0}$ in 
 $\Omega^{\s \Lambda}_{\s R, \bf d}\big(P\big)
\times \Omega^{\s \Lambda}_{\s S, \bf d}\big(Q\big)$
if and only if there exists a linear map
$\theta: M \longrightarrow P$ 
satisfying

\bigskip
\indent \indent $\bullet$ $P \subseteq \ker \theta$ 
and $\big[ e_i , \theta \big] = 0$ for $i=0,1$

\bigskip
\indent \indent $\bullet$ 
$\big[\delta , \theta\big]\big(Q_t\big) \subseteq P_t$
and $\big[\delta^*, \theta \big]\big(Q_t \big) \subseteq P_t$
whenever $n \geq t \geq 1$ where $\delta^* = \alpha^* + \beta^*$

\bigskip
\noindent
such that $M_t = P_t \oplus_{\s \theta}
Q_t$ whenever $n \geq t \geq 0$ where

\[ P_t \oplus_{\s \theta} Q_t \ := P_t \oplus 
\Big\{ \, \theta(x) + x \, \Big| \, x \in Q_t \, \Big\}. \]

\bigskip
\noindent
Note that the commutator identities are necessary
and sufficient conditions to insure that 
$\big(P_t \oplus_{\s \theta} Q_t \big)_{\s n \geq t \geq 0}$
is a $\Lambda$-composition series
in $M$. There is a degree of redundancy in this description
of the $\pi_{\s \bf c}$-preimages since 

\[ P_t \oplus_{\s \theta} Q_t \ = \ P_t \oplus_{\s \zeta} Q_t \]

\bigskip
\noindent
if and only if $(\theta - \zeta)\big(Q_t\big) \subseteq P_t$
whenever $n \geq t \geq 0$. In this case we declare $\theta$
and $\zeta$ to be equivalent and write 
$\theta \sim \zeta$. As the authors of \cite{GLS} point out 
in lemma 3.2.2 the kernel and commutator constraints and the equivalence
relation $\sim $ are all linear conditions and
thus the fiber of preimages under $\pi_{\s \bf c}$ 
must be affine.

\bigskip
\noindent
The map $\pi_{\s \bf c}$ is known to be a morphism 
of varieties and thus it descends to a morphism between
$\Omega^{\s \Lambda}_{\s T, \bf d}\big(M\big)$
and $\Omega^{\s \Lambda}_{\s R, \bf d}\big(P\big)
\times \Omega^{\s \Lambda}_{\s S, \bf d}\big(Q\big)$.
Moreover the Euler characteristic of any of its 
fibers is equal to one owing to the fact that
each fiber is isomorphic to an affine space.
By proposition 7.4.1 in \cite{GLS} we may
conclude in this case that 

\[ \chi \Big( \, \Omega^{\s \Lambda}_{\s T, \bf d}\big(M\big) \, \Big)
\ = \ \chi \Big( \,
\Omega^{\s \Lambda}_{\s R, \bf d}\big(P\big)
\times \Omega^{\s \Lambda}_{\s S, \bf d}\big(Q\big) \, \Big) \]

\bigskip
\noindent
On the other hand Euler characteristic is multiplicative 
therefore

\[ \begin{array}{ll}
\chi \Big( \, \Omega^{\s \Lambda}_{\s T, \bf d}\big(M\big) \, \Big)
&= \ \chi \Big( \,
\Omega^{\s \Lambda}_{\s R, \bf d}\big(P\big)
\times \Omega^{\s \Lambda}_{\s S, \bf d}\big(Q\big) \, \Big) \\ 
&= \ \chi \Big( \, \Omega^{\s \Lambda}_{\s R, \bf d}\big(P\big) \, \Big)
\, \cdot \, \chi \Big( \, 
\Omega^{\s \Lambda}_{\s S, \bf d}\big(Q\big) \, \Big) \\
&= \ 1 \end{array} \]

\end{proof}

\begin{Cor}
Let $M$ be a shape module of parity $i$, shape $\lambda$, 
and dimension $n$. Let ${\bf d}$ be a bit string
in $\{0,1\}^n$ and let $T$ be a standard
tableau of shape $\lambda$. Then $\Omega^{\s \Lambda}_{\s T, \bf d}
\big(M\big)$ will be non-empty if and only if 
the $i$-parity of $T$ equals ${\bf d}$.
\end{Cor}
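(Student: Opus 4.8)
The plan is to induct on the number $N$ of columns of the partition $\lambda$, reusing the reduction from the proof of the preceding corollary. When $N=0$ the module $M$ is zero and both conditions are vacuous. When $N=1$ we have $\lambda=1^{k}$, which carries a single standard tableau $T$, so $\Omega^{\s\Lambda}_{\s T,\bf d}(M)$ coincides with $\mathcal{F}^{\s\Lambda}_{\s\bf d}(M)$; since the shape module of shape $1^{k}$ is uniserial, $\mathcal{F}^{\s\Lambda}_{\s\bf d}(M)$ is non-empty precisely when ${\bf d}$ is the (unique) list of $\Lambda$-composition factors of $M$ read off from the socle upwards. Reading the $\Lambda$-action on $M$ straight from its definition, the composition factor produced by the box in row $s$ of the single column is $S_{\epsilon}$ with $\epsilon$ the parity of $s+i$ --- that is, its $i$-parity --- so this list is exactly the $i$-parity ${\bf d_{\s T}}$ of $T$, which settles the base case.

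For the inductive step I would take $N>1$ and invoke the decomposition of Section 3: let $P\subseteq M$ be the shape submodule on the first column, a shape module of shape $1^{k}$ and parity $i$, let $M/P$ be the quotient, a shape module of shape $\overline\lambda$ (with $N-1$ columns) and parity $1-i$, and keep the notation $Q$, $\pi_{\s\bf c}$, $\psi_{\s P}$, $\psi_{\s Q}$ from there. Given a standard tableau $T$ of shape $\lambda$, let ${\bf c}\in\{0,1\}^{n}_{k}$ record the content of its first column, and let $R\in\Tab_{\s \bf c}\big(1^{k}\big)$ and $S\in\Tab_{\s \bf \overline{c}}\big(\overline\lambda\big)$ be the first column of $T$ and its complement, with standardizations $\overline R$ and $\overline S$. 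The proposition that computes $\pi_{\s\bf c}\bigl(\Omega^{\s\Lambda}_{\s T,\bf d}(M)\bigr)$ gives $\pi_{\s\bf c}\bigl(\Omega^{\s\Lambda}_{\s T,\bf d}(M)\bigr)=\Omega^{\s\Lambda}_{\s R,\bf d}(P)\times\Omega^{\s\Lambda}_{\s S,\bf d}(Q)$, so, as observed in the proof of the preceding corollary, $\Omega^{\s\Lambda}_{\s T,\bf d}(M)$ is non-empty if and only if both $\Omega^{\s\Lambda}_{\s R,\bf d}(P)$ and $\Omega^{\s\Lambda}_{\s S,\bf d}(Q)$ are; by the isomorphisms $\psi_{\s P}$ and $\psi_{\s Q}$ this is equivalent to non-emptiness of both $\Omega^{\s\Lambda}_{\s\overline R,\bf e}(P)$ and $\Omega^{\s\Lambda}_{\s\overline S,\bf f}(M/P)$, where ${\bf e}=(d_{r_{1}},\dots,d_{r_{k}})$ and ${\bf f}=(d_{s_{1}},\dots,d_{s_{n-k}})$. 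Since $P$ has one column and $M/P$ has $N-1<N$ columns, the inductive hypothesis turns these two conditions into the statements that ${\bf e}$ equals the $i$-parity of $\overline R$ and that ${\bf f}$ equals the $(1-i)$-parity of $\overline S$.

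The final task is to check that these two parity conditions hold together if and only if ${\bf d}={\bf d_{\s T}}$. Since $T$ is standard, its first-column indices $r_{1}<\dots<r_{k}$ occupy rows $0,\dots,k-1$, so the box of $T$ labelled $r_{m}$ sits in cell $(m-1,0)$, the very cell occupied by the box of $\overline R$ labelled $m$; hence the $m$-th bit of the $i$-parity of $\overline R$ is the $i$-parity of the box of $T$ labelled $r_{m}$, so the condition that ${\bf e}$ equal the $i$-parity of $\overline R$ says precisely that $d_{t}$ equals the $i$-parity of the box of $T$ labelled $t$ for every $t$ with $c_{t}=1$. For an index $t=s_{l}$ with $c_{t}=0$, if the box of $T$ labelled $s_{l}$ lies in row $a$ and column $b$ with $b\ge 1$, then in $\overline S$ it becomes the box labelled $l$ in row $a$, column $b-1$ of $\overline\lambda$, whose $(1-i)$-parity is the parity of $a+(b-1)+(1-i)=a+b-i$, which equals the parity of $a+b+i$ --- the $i$-parity in $\lambda$ of the box of $T$ labelled $s_{l}$. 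Thus the condition that ${\bf f}$ equal the $(1-i)$-parity of $\overline S$ says that $d_{t}$ equals the $i$-parity of the box of $T$ labelled $t$ for every $t$ with $c_{t}=0$, and together the two conditions say this for all $t$, i.e.\ ${\bf d}={\bf d_{\s T}}$; this closes the induction. The one genuinely delicate point is this final bookkeeping --- one must see that the parity flip $i\mapsto 1-i$ forced on $M/P$ when the first column is deleted is exactly undone by the leftward column shift passing from $\lambda$ to $\overline\lambda$, and that the inductive hypothesis is invoked for $P$ and for $M/P$ with their respective (and different) parities; the rest is a direct transcription of the constructions in Section 3.
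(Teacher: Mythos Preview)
Your proof is correct and follows essentially the same route as the paper's own argument: induction on the number of columns, with the base case handled by uniseriality of the column shape module, and the inductive step via Proposition~2 together with the isomorphisms $\psi_{\s P}$, $\psi_{\s Q}$ to reduce to $P$ and $M/P$. Your final bookkeeping is slightly more explicit than the paper's --- you compute the parity shift $a+(b-1)+(1-i)\equiv a+b+i$ directly rather than asserting it --- but the structure and content of the two proofs are the same.
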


\begin{proof}
Assume $\lambda$ has $k$ parts. The corollary is clearly
valid when $\lambda = 1^k$ because the partition admits only
on standard tableau and the corresponding shape module is
uniserial. In this case $\Omega^{\s \Lambda}_{\s T, \bf d}
\big(M\big)$ will be non-empty if and only if 
$d_t$ equals the parity $t + i$ --- which is 
precisely the $i$-parity of the box in row $t$.
Assume now that the number of columns of $\lambda$
is $N > 1$ and hypothesize inductively that the corollary
holds for partitions with strictly fewer than $N$ columns.

\bigskip
\noindent
Let $T$ be a
standard tableau of shape $\lambda$ and let ${\bf c}$ be
the unique bit string in $\{0,1\}^n_k$ corresponding to the 
content of the 1-st column of $T$; namely $c_t = 1$
if and only if $t$ labels a box in the first column of
$T$. Write ${\bf [c]} = \big\{ r_1, \dots, r_k \big\}$
and ${\bf [\overline{c}]} = \big\{ s_1, \dots, s_{n-k} \big\}$.

\bigskip
\noindent
By proposition 1 we know 
$\Omega^{\s \Lambda}_{\s T, \bf d}\big(M\big)$
is non-empty if and only if 
both $\Omega^{\s \Lambda}_{\s R, \bf d}\big(P\big)$ and
$\Omega^{\s \Lambda}_{\s S, \bf d}\big(Q\big)$ are 
non-empty where $R \in \Tab_{\s \bf c}\big(1^k\big)$ 
and $S \in \Tab_{\s \bf \overline{c}}\big(\overline{\lambda}\big)$ 
are the tableaux obtained by taking the 1-st column and its complement
in $T$. 
On the other hand $\Omega^{\s \Lambda}_{\s R, \bf d}\big(P\big)$ and
$\Omega^{\s \Lambda}_{\s S, \bf d}\big(Q\big)$ are 
non-empty if and only if    
$\Omega^{\s \Lambda}_{\s \overline{R}, \bf e} \big(P\big)$
and $\Omega^{\s \Lambda}_{\s \overline{S}, \bf f}\big(M/P\big)$
are respectively non-empty; recall that $\overline{R}$ and
$\overline{S}$ are the standard tableaux obtained from
$R$ and $S$ by replacing the entries 
$r_t$ and $s_t$ by $t$ respectively. Also
$e_t = d_{r_t}$ whenever $k \geq t \geq 1$ and 
$f_t = d_{s_t}$ whenever $n-k \geq t \geq 1$.

\bigskip
\noindent
Since both $P$ and $M/P$ are shape modules of parities $i$ and
$1-i$ and since their respective shapes $1^k$ and $\overline{\lambda}$ 
have fewer than $N$ columns we may apply the inductive assumption and 
conclude that 

\[ \begin{array}{ll}
\text{ $\Omega^{\s \Lambda}_{\s \overline{R}, \bf e}$ non-empty} 
&\Longleftrightarrow \ \text{$e_t$ equals $i$-parity of box $t$ in 
$\overline{R}$} \\
\text{ $\Omega^{\s \Lambda}_{\s \overline{S}, \bf f}$ non-empty} 
&\Longleftrightarrow \ \text{$f_t$ equals $(1-i)$-parity of box $t$ in 
$\overline{S}$} \end{array} \]

\bigskip
\noindent
equivalently

\[ \begin{array}{ll}
\text{ $\Omega^{\s \Lambda}_{\s R, \bf d}$ non-empty} 
&\Longleftrightarrow \ \text{$d_{r_t}$ equals $i$-parity of box $r_t$ in 
$R$} \\
\text{ $\Omega^{\s \Lambda}_{\s S, \bf d}$ non-empty} 
&\Longleftrightarrow \ \text{$d_{s_t}$ equals $i$-parity of box $s_t$ in 
$S$} \end{array} \]

\bigskip
\noindent
Bear in mind that the $(1-i)$-parity of a box in $\overline{S}$
is equal to the $i$-parity of the same box in $S$ owing to the fact
that first column in $S$, which is situated to the immediate right
of the first column in $T$, is counted as column $1$ not $0$.
The corollary follows given that 
$[1 \dots n] = \big\{d_{r_t} \, | \, k \geq t \geq 1 \big\}
\sqcup \big\{ d_{s_t} \, | \, n-k \geq t \geq 1 \big\}$.

\end{proof}

\bigskip
\noindent
{\bf Proof of Theorem 1:}
By lemma (1) we know that

\[ \chi \Big( \mathcal{F}^{\s \Lambda}_{\s \bf d}\big(M\big) \Big)
\ = \ \sum_{T \in \Tab(\lambda)} \chi \Big( \,
\Omega^{\s \Lambda}_{\s T, \bf d}\big(M\big) \,
\Big) \]

\bigskip
\noindent
By Corollary (1) each Euler characteristic $\chi \Big( \,
\Omega^{\s \Lambda}{\s T, \bf d}\big(M\big) \, \Big)$
contributes either $1$ or $0$ and  
Corollary (2) stipulates that only 
standard tableau $T$ whose $i$-parity equals
${\bf d}$ add a non-zero contribution. Therefore 
the Euler characteristic $\chi \Big( \, 
\mathcal{F}^{\s \Lambda}_{\s \bf d}\big(M\big) \, \Big)$
equals the number of standard tableaux of shape $\lambda$
whose $i$-parity equals ${\bf d}$. 

\bigskip
\noindent
Taking into account Proposition (1) from the first section 
we can furthermore conclude:

\begin{Cor}
Let $M$ be a shape module of shape $\lambda$,
parity $i$, and dimension $n$. 
Let ${\bf d}$ be a bit string $\{0,1\}^n$ which
can be expressed as ${\bf i}^{\bf j}$ for some
alternating bit string ${\bf i}$ in $\{0,1\}^k$
and non-negative integer $k$-tuple ${\bf j}$
in $\Bbb{Z}^k_{\s \geq 0}$, then the
Euler characteristic 
$\chi \Big( \, \mathcal{F}^{\s \Lambda}_{\s \bf d}\big(M\big)
\, \Big) $ equals 

\[  j_1! \cdots j_k! \
\Big| \chess^{(i^*)}_{\s \bf j} \big(\lambda \big) \, \Big| \]

\bigskip
\noindent
where $i^*$ is the parity of  $i + i_1 + 1$.

\end{Cor}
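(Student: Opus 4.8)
\bigskip
\noindent
\textbf{Proof proposal.}
The plan is to deduce the formula by concatenating the two enumerative results already established. First I will invoke Theorem (1): since $M$ is a shape module of shape $\lambda$ and parity $i$ with $\dim M = n$, the Euler characteristic $\chi\big(\mathcal{F}^{\s\Lambda}_{\s\bf d}(M)\big)$ equals the number of standard tableaux of shape $\lambda$ whose $i$-parity is ${\bf d}$, that is, $\big|\super^{(i)}(\lambda;{\bf d})\big|$.

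\bigskip
\noindent
Second I will apply Proposition (1) of the first section. By hypothesis ${\bf d}$ can be written as ${\bf i}^{\bf j}$ for some alternating bit string ${\bf i}=(i_1,\dots,i_k)$ in $\{0,1\}^k$ and some tuple ${\bf j}=(j_1,\dots,j_k)$ in $\Bbb{Z}^k_{\s\geq 0}$; because ${\bf i}^{\bf j}$ lies in $\{0,1\}^n$ with $n=|\lambda|$, the requirement $j_1+\cdots+j_k=|\lambda|$ in Proposition (1) is satisfied automatically. Proposition (1) then gives $\big|\super^{(i)}(\lambda;{\bf d})\big| = j_1!\cdots j_k!\,\big|\chess^{(i^*)}_{\s\bf j}(\lambda)\big|$, where $i^*$ is the parity of $i+i_1+1$. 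Substituting this into the identity furnished by Theorem (1) yields the asserted value of $\chi\big(\mathcal{F}^{\s\Lambda}_{\s\bf d}(M)\big)$.

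\bigskip
\noindent
I do not foresee any genuine obstacle, since all the mathematical content resides in Theorem (1) and Proposition (1) and the remaining work is purely formal. The only things to verify are bookkeeping matters: that the parity index $i^*$ appearing in the statement agrees with the one produced by Proposition (1) --- it is literally the same expression, the parity of $i+i_1+1$ --- and that the hypothesis ${\bf i}^{\bf j}={\bf d}$ required by Proposition (1) is exactly the standing assumption of the corollary. Both hold by inspection, and the argument is complete once these are recorded.
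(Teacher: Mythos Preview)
Your proposal is correct and matches the paper's own treatment exactly: the paper derives this corollary by combining Theorem (1) with Proposition (1), just as you do. There is nothing further to add.
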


\begin{Conj}
Let $\lambda$ be a partition of $n$, let $i = 0,1$ a choice 
of parity, and let ${\bf d}$ be a bit string in $\{0,1\}^n$.
Let $M$ be a shape module of shape $\lambda$ and parity $i$
and let $T \in \super^{(i)} \big(\lambda: {\bf d})$, then
$\Omega^{\Lambda}_{\s \bf d,T}(M)$ is an affine space of 
dimension $\gr(T)$. Alternatively, if 
$q$ denotes a power of a prime and $\Bbb{F}_q$ is a finite
field with $q$ elements then the number of $\Bbb{F}_q$-rational
points of $\mathcal{F}^{\s \Lambda}_{\bf d}\big(M\big)$ is

\[ \sum_{T \in \super^{(i)}(\lambda;
{\bf d})}    q^{ \gr(T) } \]

\bigskip
\noindent
where, accordinging to definition (5) of section (1),
$\gr(T)$ is the number of grounded ${\bf d}$-transposition
pairs of $T$.

\end{Conj}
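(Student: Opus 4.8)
The plan is to refine the inductive proof of Corollary (1) so that instead of merely verifying that each non-empty stratum $\Omega^{\s \Lambda}_{\s T, \bf d}(M)$ has Euler characteristic one, we track the precise affine dimension through the induction. The base case $\lambda = 1^k$ is trivial: the unique standard tableau $T$ has no $\bf d$-transposition pairs whatsoever (all boxes lie in a single column, so exchanging any two entries violates column-strictness), hence $\gr(T) = 0$, and indeed the corresponding stratum is either empty or a point. For the inductive step I would take $\lambda$ with $N > 1$ columns, split $T$ into its first column $R \in \Tab_{\s \bf c}(1^k)$ and complement $S \in \Tab_{\s \bf \overline{c}}(\overline\lambda)$ exactly as in Corollary (1), and appeal to the fiber description already set up there: a point of $\Omega^{\s \Lambda}_{\s T, \bf d}(M)$ over a given pair in $\Omega^{\s \Lambda}_{\s R, \bf d}(P) \times \Omega^{\s \Lambda}_{\s S, \bf d}(Q)$ is an equivalence class of linear maps $\theta : M \to P$ satisfying the kernel, idempotent-commutator, and the $[\delta,\theta]$, $[\delta^*,\theta]$ constraints, modulo $\theta \sim \zeta$ when $(\theta-\zeta)(Q_t) \subseteq P_t$ for all $t$. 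Since $\Omega^{\s \Lambda}_{\s \overline R, \bf e}(P)$ and $\Omega^{\s \Lambda}_{\s \overline S, \bf f}(M/P)$ are affine spaces of dimensions $\gr(\overline R) = 0$ and $\gr(\overline S)$ by induction, and the fiber is itself affine, $\Omega^{\s \Lambda}_{\s T, \bf d}(M)$ is an affine space; the content of the claim is then the additivity
\[ \gr(T) \ = \ \gr(S) \ + \ \dim(\text{generic fiber of } \pi_{\s \bf c}). \]

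Since $\gr(R) = 0$ always (column $1^k$ admits no transposition pairs), this reduces to two combinatorial/linear-algebraic sub-claims. First, on the combinatorial side I would show that the grounded $\bf d$-transposition pairs of $T$ split into those $\{s,t\}$ with both $s,t$ outside the first column — which correspond bijectively to grounded $\bf d$-transposition pairs of $S$ once we account for the column-index shift (the first column of $S$ sits in column $1$ of $\lambda$, not column $0$, so the $i$-parity bookkeeping matches the $(1-i)$-parity of $\overline S$, exactly as invoked in the proof of Corollary (2)) — plus those pairs $\{s,t\}$ with $s$ in the first column and $t$ not; pairs with both in the first column cannot be grounded. Second, on the geometric side I would compute $\dim$ of the generic $\pi_{\s \bf c}$-fiber as $\dim\{\theta\}_{\text{constraints}} - \dim\{\theta : (\theta)(Q_t)\subseteq P_t \ \forall t\}$, parametrizing $\theta$ by where it sends the basis vectors $v_{s,t}$ of $Q$ (those with $t \geq 1$) into $P = \langle v_{0,0},\dots,v_{k-1,0}\rangle$; the $[e_i,\theta]=0$ condition forces $\theta(v_{s,t})$ to land in the correct idempotent summand, and the $[\delta,\theta]$, $[\delta^*,\theta]$ conditions relate $\theta$'s values on $\delta$- and $\delta^*$-adjacent boxes, cutting the parameter space down to a space whose dimension I claim equals the number of grounded first-column-to-complement transposition pairs of $T$. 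The key identity to nail is that a box $v_{s,t}$ of $Q$ can "see" a box $v_{r,0}$ of $P$ via an allowed $\theta$-component (compatible with all commutator constraints evaluated against the flags $P_\bullet, Q_\bullet$) precisely when exchanging the corresponding entries in $T$ produces a standard tableau with the matching $\bf d$-value and the grounding inequalities $\row_s < \row_t$, $\col_t < \col_s$ hold.

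The main obstacle I expect is precisely this last linear-algebra-to-combinatorics dictionary: showing that the dimension of the space of admissible $\theta$ modulo $\sim$ equals the count of grounded transposition pairs crossing the first column. This requires making the constraints $[\delta,\theta](Q_t)\subseteq P_t$ and $[\delta^*,\theta](Q_t)\subseteq P_t$ completely explicit in the basis $\{v_{s,t}\}$, observing that $[\delta,\theta]$ moves a box one step up (north) and $[\delta^*,\theta]$ one step left (west) — composed with $\theta$'s single-step-to-column-zero action — and then checking that the resulting system of linear conditions on the matrix entries of $\theta$ has solution space of the asserted dimension. Here I would lean heavily on the structure already exploited in Geiss-Leclerc-Schröer's lemmas 3.1.1, 3.2.2, and 7.4.1 (cited in the proof of Corollary (1)), adapting their normal-form analysis of such $\theta$-parametrized fibers; the novelty is only the dimension count, not the affineness, which is already in hand. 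Once the additivity $\gr(T) = \gr(S) + (\text{fiber dimension})$ is established, the second assertion about $\mathbb{F}_q$-points follows immediately by summing $q^{\dim}$ over the strata, since over $\mathbb{F}_q$ an affine space of dimension $\gr(T)$ has exactly $q^{\gr(T)}$ rational points and $\mathcal{F}^{\s \Lambda}_{\s \bf d}(M)$ is their disjoint union indexed by $T \in \super^{(i)}(\lambda;{\bf d})$ via Corollaries (1) and (2).
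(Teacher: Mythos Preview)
The statement you are addressing is labeled \textbf{Conjecture} in the paper; the paper offers no proof of it, so there is nothing to compare your argument against. What follows is therefore an assessment of your strategy on its own merits.

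Your inductive plan has a concrete combinatorial gap. The additivity
\[
\gr(T)\;=\;\gr(\overline S)\;+\;\bigl(\text{dimension of a }\pi_{\bf c}\text{-fiber}\bigr)
\]
already fails at the level of the tableau count, before any geometry enters. Take $\lambda=(3,2)$, $i=0$, and $T$ the tableau with rows $1\,2\,3$ and $4\,5$; then ${\bf d}=(0,1,0,1,0)$. Every candidate pair in $T$ (namely $\{3,5\}$ and $\{2,4\}$) fails the swap-standard test because the first-column entries obstruct it, so $\gr(T)=0$. But after stripping column~$0$ you obtain $\overline S$ with rows $1\,2$ and $3$ in shape $(2,1)$, parity $1-i=1$, ${\bf f}=(1,0,0)$, and here $\{2,3\}$ \emph{is} a grounded ${\bf f}$-transposition pair, giving $\gr(\overline S)=1$. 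Since fiber dimensions are nonnegative, the proposed identity cannot hold. The source of the error is precisely your claim that grounded pairs of $T$ lying outside column~$0$ biject with grounded pairs of $S$: a swap that is standard in $S$ may become non-standard in $T$ once the column-$0$ entries are reinstated between the swapped boxes.

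Pushing this example into the geometry reveals a deeper issue with the framework you are borrowing. One checks directly that $\Omega^{\Lambda}_{T,{\bf d}}(M)$ is a single point (consistent with $\gr(T)=0$), whereas $\Omega^{\Lambda}_{\overline S,{\bf f}}(M/P)\cong\mathbb{A}^1$ (consistent with $\gr(\overline S)=1$). Thus the restriction of $\pi_{\bf c}$ to $\Omega^{\Lambda}_{T,{\bf d}}(M)$ cannot surject onto $\Omega^{\Lambda}_{R,{\bf d}}(P)\times\Omega^{\Lambda}_{S,{\bf d}}(Q)$: the target is one-dimensional and the source is a point. Concretely, with $Q_3=\langle v_{0,1},\,v_{0,2}+v_{1,1}\rangle$ one has $\delta(v_{0,2}+v_{1,1})=v_{0,1}+v_{1,0}$, whose $P$-component $v_{1,0}$ does not lie in $P_3=\langle v_{0,0}\rangle$; hence $P_3\oplus Q_3$ is not a $\Lambda$-submodule and no preimage exists. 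The column-stripping induction therefore does not control dimensions in the way the Euler-characteristic argument of Corollary~(1) might suggest, and any proof of the conjecture will need either a different filtration or a substantially more careful analysis of which $(P_\bullet,Q_\bullet)$ actually lie in the image of $\pi_{\bf c}$.
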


\bigskip
\section{Block-Toeplitz Representation of $\SL_2(\mathcal{L})$:}

\bigskip
\bigskip
\noindent
In this section we recount a standard realization of 
the loop group in terms of infinite block-Toeplitz matrices
described in lecture 9 of \cite{Kac}.

\bigskip
\noindent
Consider the variable $t$ as a coordinate for the circle
subgroup $S^1$ in $\Bbb{C}^*$ and 
let $\mathcal{H} := L^2\big(S^1; \Bbb{C}^2\big)$ be the Hilbert space of all
square integrable vector-valued functions $f:S^1 \longrightarrow
\Bbb{C}^2$ expressed in the coordinate $t$.  
A loop $g = \big(g_{ij}\big)$ in
$\SL_2(\mathcal{L})$
gives rise to a multiplication operator $T_g:\mathcal{H}
\longrightarrow \mathcal{H}$ defined by 

\[  T_g \begin{pmatrix} f_1 \\ f_2 \end{pmatrix} \ := \ 
\begin{pmatrix} g_{11} & g_{12} \\ g_{21} & g_{22} \end{pmatrix}
\cdot \begin{pmatrix} f_1 \\ f_2 \end{pmatrix} \]

\bigskip
\noindent
Note: the component functions of the result of this matrix
multiplication are still square integrable since, over a compact
domain such as $S^1$, the product
of a bounded integrable function (in this case a polynomial function) 
and a square integrable function
remains square integrable. 

\bigskip
\noindent
The map $T_g$ is clearly linear and so we may write down
the matrix representing $T_g$ 
with respect to the (ordered) Fourier basis 
$ \big\{ \psi_{n} \ \big| \ n \in \Bbb{Z} \big\}$
of $\mathcal{H}$ where

\[ \psi_{2j + i} := \ t^{-j} \, \vec{{\bf e}}_i \ \ \text{for 
$j \in \Bbb{Z}$ and $i = 1,2$}. \]

\bigskip
\noindent
The matrix of $T_g$ will be the $\Bbb{Z} \times \Bbb{Z}$ matrix 
whose $(M,N)$ entry is
 
\[ \Res {\D {g_{ij} \over {t^{n-m+1} } }} \]

\bigskip
\noindent
where $M = 2m + i$ and $N = 2n + j$ with
$i, j \in \{1, 2\}$ and where $\Res$ means
residue. Alternatively $T_g$ can be 
expressed in block form

\[ \begin{pmatrix} \ddots & & & & \\ & a_0 & a_1 & a_2 &  \\
& a_{-1} & a_0 & a_1 &  \\ & a_{-2} & a_{-1} & a_0  \\ & & & &  \ddots
\end{pmatrix} \]

\bigskip
\noindent
where $a_k := \bigg(\Res \D {g_{ij} \over {t^{k+1}} } \bigg)$
for $k \in \Bbb{Z}$.
The $2 \times 2$ matrix $a_k$ is precisely the  
coefficient matrix of $t_k$ in the Fourier expansion
$g = {\D \sum_{k \in \Bbb{Z}} a_k t^k}$.

\bigskip
\noindent
Such a $\Bbb{Z} \times \Bbb{Z}$ matrix in block form with constant block diagonals
will be called a {\it block Toeplitz matrix}. We will identify $T_g$ with its matrix
and the map $g \mapsto T_g$ defines an injective homomorphism, denoted $T$, from the loop group
$\SL_2(\mathcal{L})$ to the {\it restricted general linear group}
$\GL_{\res}(\Bbb{C})$ (see \cite{Segal} chapter 6).

\begin{Def}
Let $\mu \subseteq \lambda$ be a pair of ordered partitions, let $i = 0,1$
be a choice of parity, and set $N := \max \big(N_\mu, N_\lambda \big)$.
For an element $g$ in the loop group
$\SL_2(\mathcal{L})$ define $\Delta^{(i)}_{\mu, \lambda}\big(g \big)$
to be the determinant of the
$N \times N $ submatrix of $T_g$ whose row
and columns sets are 
$\set^{(i)}_{N_\lambda}(\mu)$ and 
$\set^{(i)}_{N_\lambda}(\lambda)$ respectively.
\end{Def}

\begin{Rem}
For each $g$ the minor $\Delta^{(i)}_{\mu, \lambda}$ is 
a polynomial
in the matrix entries of $T_g$ and as such
is a regular function over $\SL_2(\mathcal{L})$.
\end{Rem}

\begin{Rem}
For a non-negative integer $n$ 
let $E^{(i)}_n$ be a short hand notation
for the minor $\Delta^{(i)}_{\s \emptyset, \lambda}$
where $\lambda$ is the partition with
$\lambda_0 = n$ and $\lambda_k = 0$ whenever
$k > 0$. 
Evidently $E^{(i)}_n$ is the $\big(i,n+i\big)$-entry
of $T_g$. In view of the fact that
the $(M,N)$ and $(M+2,N+2)$ entries of $T_g$ are equal
one observes that the following {\bf $i$-Pieri} rule must hold
for $g \in U_+$:

\[ \Delta^{(i)}_{{\s \emptyset}, \lambda} \ = \
\det \Bigg(
{\mathlarger{\mathlarger{\mathlarger E}}}^{\D (q_s)}_{\D p_{st}} 
\Bigg) \]

\bigskip
\noindent
where $N = N_\lambda$ and $(q_0, \dots, q_{\s N})$ is the
alternating bit string in $\{0,1\}^{\s N}$ starting 
with $q_0 = i$ and $p_{st} = \lambda_{{\s N} - s} + t - s$ 
whenever $N \geq s,t \geq 0$. Here we use the convention
that $E^{(i)}_n = 0$ whenever $n$ is negative.

\end{Rem}

\begin{Def}
For non-negative integers $m$ and $n$ let 
$\Delta^{(i)}_{m,n}$ be short hand notation for 
$\Delta^{(i)}_{\mu, \lambda}$ where
$\mu$ is the partition with 
$\mu_k = m - k$ whenever $m \geq k$ and $\mu_k = 0$ whenever
$k > m$ and where $\lambda$ is the partition 
with $\lambda_k = n -k$
whenever $n \geq k$ and $\lambda_k = 0$ whenever
$k > n$.
\end{Def}

\begin{Rem}
Up to re-indexing and a {\it twist} as defined in \cite{BIRS}
the minors $\Delta^{(i)}_{0, n}$
are the minors considered in conjecture 4.3 of \cite{BIRS}.
\end{Rem}

\begin{Conj}
The minors $\Delta^{(i)}_{m,n}$ are precisely the 
{\bf generalized minors} of 
Fomin-Zelevinsky defined in \cite{Bruhat}.
\end{Conj}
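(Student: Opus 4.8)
\bigskip
\noindent
{\bf A proposed strategy for Conjecture (2).}
The natural plan is to realise both families as matrix coefficients --- in the two level-one integrable modules $L(\omega_0)$ and $L(\omega_1)$ of the affine algebra of type $\widehat{A_1}$ --- taken between extremal weight vectors, and then to match the two descriptions. Recall that the block-Toeplitz operator $T_g$ is the image of $g$ under the action of $\SL_2(\mathcal{L})$ on $\mathcal{H} = L^2\big(S^1; \Bbb{C}^2\big)$, and that the (projective) action of $\GL_{\res}(\Bbb{C})$ on the semi-infinite wedge space $\bigwedge^{\infty/2}\mathcal{H}$ has charge sectors carrying the level-one representations. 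A standard Sato--Pl\"ucker computation then identifies the minor $\Delta^{(i)}_{\mu, \lambda}(g)$ of Definition (4) with the matrix coefficient $\big\langle v^{(i)}_{\mu} \, \big| \, T_g \, \big| \, v^{(i)}_{\lambda} \big\rangle$, where $v^{(i)}_{\nu}$ denotes the monomial wedge with underlying Maya diagram $\big\{ \nu_n + i - n \, \big| \, n \geq 0 \big\}$; the prescription of listing rows and columns in increasing order is precisely what fixes the sign of this wedge, so there is no ambiguity.

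\bigskip
\noindent
On the Fomin--Zelevinsky side, the generalized minor $\Delta_{u\omega_i, v\omega_i}$ is by definition the matrix coefficient $g \mapsto \big\langle u\cdot v_{\omega_i} \, \big| \, g \, \big| \, v\cdot v_{\omega_i}\big\rangle$ of $L(\omega_i)$ between the extremal weight vectors obtained from the highest weight vector $v_{\omega_i}$ by a fixed system of Tits lifts $\overline{s_0}, \overline{s_1}$. The combinatorial heart of the argument is then to verify that, for the staircase partitions $\mu = (m, m-1, \dots, 1)$ and $\lambda = (n, n-1, \dots, 1)$ entering $\Delta^{(i)}_{m,n}$, the wedges $v^{(i)}_{\mu}$ and $v^{(i)}_{\lambda}$ are precisely such extremal weight vectors: the affine Weyl group $W\big(\widehat{A_1}\big)$ acts on the charge-$i$ Maya diagrams through the infinite-dihedral action on $\Bbb{Z}$, and one checks that the orbit of the vacuum is exactly the set of Maya diagrams attached to the staircases. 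This yields explicit Weyl group elements $u = u_m$ and $v = u_n$ realising $\Delta^{(i)}_{m,n}$ as $\Delta_{u_m \omega_i, \, u_n \omega_i}$; running the correspondence in reverse --- for both parities, and keeping track of which pairs $(u,v)$ occur --- would establish conversely that every $\widehat{A_1}$ generalized minor arises in this way, which is the asserted identification of the two families. Any residual scalar is then pinned down by evaluation at $g = 1$.

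\bigskip
\noindent
I expect the Fock-space dictionary above to be the routine part, and the real obstacle to lie in reconciling the normalisations and in passing between the loop group and the full Kac--Moody group. The Fomin--Zelevinsky extremal weight vectors are normalised through a fixed choice of Tits lifts, made so that the exchange relations and signs come out consistently, whereas the Toeplitz minor carries the ``naive'' normalisation coming from the increasing listing of rows and columns; one must show that the discrepancy between the two is a trivial cocycle along the relevant Weyl orbit. More seriously, generalized minors are a priori regular functions on the affine Kac--Moody group, with its central and loop-rotation directions, so one must check that the level-one matrix coefficients in play descend to honest regular functions on the bare loop group $\SL_2(\mathcal{L})$ --- equivalently, that the central-charge and degree contributions cancel in exactly the combinations singled out by the staircase partitions. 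This is precisely where the {\it twist} of \cite{BIRS} recorded in Remark (6) should enter, and resolving it ought at the same time to produce the normalisation under which conjecture (4.3) of \cite{BIRS} holds. As an alternative to the representation-theoretic route, one could instead try to show directly that the $\Delta^{(i)}_{m,n}$ and the generalized minors obey the same $i$-Pieri and exchange recursions from the same seed data --- the matrix entries $E^{(i)}_n$ of $T_g$ --- but this is likely to reduce to the same bookkeeping.
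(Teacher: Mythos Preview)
\bigskip
\noindent
\textbf{Referee note.} There is nothing in the paper to compare your proposal against: the statement is recorded as Conjecture~(2) at the end of Section~4 and is left entirely open. No proof, sketch, or strategy for it appears anywhere in the text; the introduction merely advertises it as a claim, and the paper then moves on to Section~5 and the proof of Theorem~2. So the question of whether your approach matches or differs from the paper's own is vacuous.

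\bigskip
\noindent
That said, your outline is a reasonable plan of attack and you have correctly identified the likely friction points --- the normalisation of extremal weight vectors via Tits lifts versus the naive ordering of rows and columns, and the passage from the Kac--Moody group to the bare loop group (together with the twist flagged in Remark~6). But as written it remains a strategy rather than a proof: the assertions that the staircase Maya diagrams exhaust the Weyl orbit of the vacuum in the charge-$i$ sector, that the sign cocycle is trivial along that orbit, and that the central and degree contributions cancel, are all stated rather than verified. Any one of these could hide a nontrivial computation or a genuine obstruction, which is presumably why the author left the statement as a conjecture in the first place.
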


\bigskip
\section{Path Formalism and Resolution:}

\bigskip
\bigskip
\noindent
This section describes a locally finite but nevertheless
infinite version of the Fomin-Zelevinsky
graphical calculus which was originally developed
in \cite{Positivity} and \cite{Bruhat} as
a means to parameterize the double
Bruhat cells of a simply connected 
simple algebraic group.

\bigskip
\noindent
For $a \in \Bbb{C}^*$ and a choice of 
parity $i = 0, 1$
the {\it chip diagram}
$\Gamma_i(a)$ is
is a weighted directed planar
graph whose vertices are
either sources or sinks:
The set of  
sources and the set of sinks 
are both indexed by $\Bbb{Z}$ 
obeying the rule that $source$ $n$ 
is connected to sink $m$ if
and only if either $n = m$ or
else $m = n+1$ and the parity
of $n$ is $i$. Finite portions of the chip diagrams
$\Gamma_0(a)$ and $\Gamma_1(a)$
are depicted below:

\[ 
\xymatrix{ 
3 \, \circ \ar[r]^{\vdots} & \circ \, 3 \\
2 \, \circ \ar[r] \ar[ur]^{a} & \circ \, 2 \\ 
1 \, \circ \ar[r] & \circ \, 1 \\
0 \, \circ \ar[r]_{\vdots} \ar[ur]^{a} & \circ \, 0  } 
\qquad \qquad \qquad
\xymatrix{ 
4 \, \circ \ar[r]^{\vdots} & \circ \, 4 \\
3 \, \circ \ar[r] \ar[ur]^{a} & \circ \, 3 \\ 
2 \, \circ \ar[r] & \circ \, 2 \\
1 \, \circ \ar[r]_{\vdots} \ar[ur]^{a} & \circ \, 1 }
\] 

\bigskip
\noindent
The diagonal edges carry weight $a$
while all other edges 
are assumed to carry weight $1$. 
Given an alternating bit string
${\bf i} = (i_1, \dots, i_k)$ 
in $\{0,1\}^k$
and a $k$-tuple
of parameters ${\bf a} = (a_1, \dots, a_k)$
in $\big( \Bbb{C}^* \big)^k$ let
$\Gamma_{i}( {\bf a} )$
denote the graph obtained by
{\it concatenating} 
the chip diagrams $\Gamma_{i_1}(a_1),
\dots, \Gamma_{i_k}(a_k)$
from left to right starting with $\Gamma_{i_1}(a_1)$.
To concatenate $\Gamma_{i_s}(a_s)$ and
$\Gamma_{i_{s+1}}(a_{s+1})$ simply
graft each sink of $\Gamma_{i_s}(a_s)$ with the 
source of $\Gamma_{i_{s+1}}(a_{s+1})$ having the
same integer label. The set of sources and sinks of the
graph $\Gamma_{\bf i}({\bf a})$ are both 
indexed by $\Bbb{Z}$ which will be respectively depicted
on the left and right.  For example
$\Gamma_{\s (1,0,1,0)}\big(a_1,a_2,a_3,a_4\big)$ is shown
here:

\[ \xymatrix{ 
3 \ \circ  \ar[r]^{\vdots} 
& \circ \ar[r] 
& \circ \ar[r] 
& \circ \ar[r]^{\vdots} 
& \circ \ 3 
\\
2 \ \circ \ar[r]
& \circ \ar[r] \ar[ur]^{a_2} 
& \circ \ar[r] 
& \circ \ar[r] \ar[ur]^{a_4} 
& \circ \ 2 
\\
1 \ \circ \ar[r] \ar[ur]^{a_1}
& \circ \ar[r] 
& \circ \ar[r] \ar[ur]^{a_3}
& \circ \ar[r] 
& \circ \ 1 
\\
0 \ \circ \ar[r]_{\vdots} 
& \circ \ar[r] \ar[ur]^{a_2}
& \circ \ar[r]
& \circ \ar[r]_{\vdots} \ar[ur]^{a_4}
& \circ \ 0  } \]

\bigskip
\noindent
The {\it weight matrix} $x\Big(\Gamma_{\bf i}({\bf a}) \Big)$ 
is the $\Bbb{Z} \times \Bbb{Z}$ matrix
whose $i,j$ entry is the sum of all weights $\wt(\pi)$ of paths
$\pi$ joining the $i$-th source to the $j$-th sink in
$\Gamma_{\bf i}({\bf a})$. By definition
the weight $\wt(\pi)$ of a path $\pi$ is the product of the
weights of edges comprising the path $\pi$.

\bigskip
\noindent
Note that the $\Bbb{Z} \times \Bbb{Z}$ 
weight matrices
$x\Big(\Gamma_i(a)\Big)$ are exactly
the block Toeplitz matrices 
$T_i(a) := T_{x_i(a)}$ 
associated with 
the element $x_i(a)$ in the 
loop group $\SL_2\big(\mathcal{L})$
for $i=0,1$ and $a \in \Bbb{C}$.
Weight matrices are multiplicative
in the sense that 

\[ x\Big( \Gamma_{\bf i}({\bf a}) \Big)
\ = \
x\Big( \Gamma_{i_1}(a_1) \Big) \cdots
x\Big( \Gamma_{i_k}(a_k) \Big) \]

\bigskip
\noindent
and therefore 

\begin{equation}
x \Big( \Gamma_{\bf i} ( {\bf a} ) \Big)
= T_{i_1}(a_1) \cdots T_{i_k}(a_k). \end{equation}

\bigskip
\noindent
Given two finite subsets $U= \{u_0, \dots, u_{\s N} \}$ and
$V = \{v_0, \dots, v_{\s N} \}$ of $\Bbb{Z}$  
the famous {\it Lindstr\"om lemma} (see \cite{Positivity} 
lemma 1) asserts in the present context that the matrix minor 
$\Delta_{U,V}$ of the weight matrix 
$x\Big( \Gamma_{{\bf i}}({\bf a})
\Big)$ is given by the sum

\begin{equation}
\Delta_{U,V} \ = \
\sum_{ \substack{  {\bf \pi} =  \{\pi_0, \dots, \pi_N \} \\ 
u_n \xrightarrow{\pi_n} v_n \\ \text{non-crossing}  }}
\ \wt(\pi_0) \cdots \wt(\pi_{\s N}) \end{equation}

\bigskip
\noindent
where the sum is taken over all families of paths 
${\bf \pi} = \{ \pi_0, \dots, \pi_{\s N} \}$ in 
$\Gamma_{{\bf i}}({\bf a})$
whose members are pairwise non-crossing (i.e.
sharing no vertices or edges) and $\pi_n$ joins
source $u_n$ to sink $v_n$ whenever $N \geq n \geq 0$.

\bigskip
\noindent
As an illustration consider the case of ${\bf i}
= (1,0,1,0)$ with $U = \{ 0, 1 \}$ and
$V= \{ 1, 3\}$. The corresponding 
families of non-crossing paths ${\bf \pi} =
\{ \pi_0, \pi_1 \}$ in $\Gamma_{{\bf i}}({\bf a})$
are depicted below (in bold font):

\[ \xymatrix { 
3 \ \circ \ar@{-->}[r] 
& \circ \ar@{-->}[r] 
& \circ \ar@{-->}[r] 
& \circ \ar@{-->}[r] 
& \bullet \ 3 
\\
2 \ \circ \ar@{-->}[r]
& \circ \ar@{-->}[r] \ar@{-->}[ur]^{a_2} 
& \circ \ar@{-->}[r] 
& \bullet \ar@{-->}[r] \ar[ur]^{a_4} 
& \circ \ 2 
\\
1 \ \bullet \ar[r] \ar@{-->}[ur]^{a_1}
& \bullet \ar[r] 
& \bullet \ar@{-->}[r] \ar[ur]^{a_3}
& \circ \ar@{-->}[r] 
& \bullet \ 1 
\\
0 \ \bullet \ar[r] 
& \bullet \ar[r] \ar@{-->}[ur]^{a_2}
& \bullet \ar[r]
& \bullet \ar@{-->}[r] \ar[ur]^{a_4}
& \circ \ 0 } 
\qquad 
\xymatrix{ 
3 \ \circ \ar@{-->}[r] 
& \circ \ar@{-->}[r] 
& \circ \ar@{-->}[r] 
& \circ \ar@{-->}[r] 
& \bullet \ 3 
\\
2 \ \circ \ar@{-->}[r]
& \bullet \ar[r] \ar@{-->}[ur]^{a_2} 
& \bullet \ar[r] 
& \bullet \ar@{-->}[r] \ar[ur]^{a_4} 
& \circ \ 2 
\\
1 \ \bullet \ar@{-->}[r] \ar[ur]^{a_1}
& \circ \ar@{-->}[r] 
& \circ \ar@{-->}[r] \ar@{-->}[ur]^{a_3}
& \circ \ar@{-->}[r] 
& \bullet \ 1 
\\
0 \ \bullet \ar[r] 
& \bullet \ar[r] \ar@{-->}[ur]^{a_2}
& \bullet \ar[r]
& \bullet \ar@{-->}[r] \ar[ur]^{a_4}
& \circ \ 0 } 
\]

\bigskip
\[ \xymatrix{ 
3 \ \circ \ar@{-->}[r] 
& \circ \ar@{-->}[r] 
& \bullet \ar[r] 
& \bullet \ar[r] 
& \bullet \ 3 
\\
2 \ \circ \ar@{-->}[r]
& \bullet \ar@{-->}[r] \ar[ur]^{a_2} 
& \circ \ar@{-->}[r] 
& \circ \ar@{-->}[r] \ar@{-->}[ur]^{a_4} 
& \circ \ 2 
\\
1 \ \bullet \ar@{-->}[r] \ar[ur]^{a_1}
& \circ \ar@{-->}[r] 
& \circ \ar@{-->}[r] \ar@{-->}[ur]^{a_3}
& \circ \ar@{-->}[r] 
& \bullet \ 1 
\\
0 \ \bullet \ar[r] 
& \bullet \ar[r] \ar@{-->}[ur]^{a_2}
& \bullet \ar[r]
& \bullet \ar@{-->}[r] \ar[ur]^{a_4}
& \circ \ 0 } 
\qquad
\xymatrix{ 
3 \ \circ \ar@{-->}[r] 
& \circ \ar@{-->}[r] 
& \bullet \ar[r] 
& \bullet \ar[r] 
& \bullet \ 3 
\\
2 \ \circ \ar@{-->}[r]
& \bullet \ar@{-->}[r] \ar[ur]^{a_2} 
& \circ \ar@{-->}[r] 
& \circ \ar@{-->}[r] \ar@{-->}[ur]^{a_4} 
& \circ \ 2 
\\
1 \ \bullet \ar@{-->}[r] \ar[ur]^{a_1}
& \circ \ar@{-->}[r] 
& \bullet \ar[r] \ar@{-->}[ur]^{a_3}
& \bullet \ar[r] 
& \bullet \ 1 
\\
0 \ \bullet \ar[r] 
& \bullet \ar@{-->}[r] \ar[ur]^{a_2}
& \circ \ar@{-->}[r]
& \circ \ar@{-->}[r] \ar@{-->}[ur]^{a_4}
& \circ \ 0 } 
\]

\bigskip
\[ \xymatrix{ 
3 \ \circ \ar@{-->}[r] 
& \circ \ar@{-->}[r] 
& \circ \ar@{-->}[r] 
& \circ \ar@{-->}[r] 
& \bullet \ 3 
\\
2 \ \circ \ar@{-->}[r]
& \bullet \ar[r] \ar@{-->}[ur]^{a_2} 
& \bullet \ar[r] 
& \bullet \ar@{-->}[r] \ar[ur]^{a_4} 
& \circ \ 2 
\\
1 \ \bullet \ar@{-->}[r] \ar[ur]^{a_1}
& \circ \ar@{-->}[r] 
& \bullet \ar[r] \ar@{-->}[ur]^{a_3}
& \bullet \ar[r] 
& \bullet \ 1 
\\
0 \ \bullet \ar[r] 
& \bullet \ar@{-->}[r] \ar[ur]^{a_2}
& \circ \ar@{-->}[r]
& \circ \ar@{-->}[r] \ar@{-->}[ur]^{a_4}
& \circ \ 0 } \]

\bigskip
\noindent
which agrees with the manual computation of the
matrix minor shown here

\[ \begin{array}{ll}
&\Delta_{U,V} \Bigg( x \Big( \Gamma_{\bf i}({\bf a}) \Big) \Bigg) \\
&= \ \Delta_{U,V} \Bigg( x\Big( \Gamma_1(a_1) \Big)
\cdot x\Big( \Gamma_0(a_2)\Big) \cdot  x\Big( \Gamma_1(a_3)\Big)
\cdot x\Big( \Gamma_0(a_4)\Big) \Bigg) \\
&= \ \Delta_{U,V} \Bigg( T_1(a_1) \cdot T_0(a_2) \cdot T_1(a_3) 
\cdot T_0(a_4) \Bigg) \\ \\
&= \ a_3a_4^2 + a_1a_4^2 + a_1 a_2^2
+ 2a_1a_2a_4 \end{array} \]

\begin{Def} 
Let $\mu \subseteq \lambda$ be a pair of ordered 
partitions, set $N = N_\lambda$, and let
$i = 0,1$ be a choice of parity. In addition
let ${\bf i}=(i_1, \dots, i_k)$ be an alternating
bit string in $\{0,1\}^k$ and let ${\bf a} = (a_1, \dots, a_k)
\in \big(\Bbb{C}^*\big)^k$.
Define $\Path^{(i)}_{\, \bf i} \big(\mu,\lambda\big)$
to be the collection of all families of 
non-crossing paths 
${\bf \pi} = (\pi_1, \dots, \pi_{\s N})$ 
in $\Gamma_{\bf i}({\bf a})$
which join sources in $U = \set^{(i)}_N(\mu)$
to sinks in $V = \set^{(i)}_N(\lambda)$. 
For ${\bf j} \in \Bbb{Z}_{\s \geq 0}$ let
$\Path^{(i)}_{\, \bf i} \big(\mu, \lambda; {\bf j} \big)$
denote those families of non-crossing paths
${\bf \pi} = (\pi_1, \dots, \pi_{\s N})$ 
in $\Path^{(i)}_{\, \bf i} \big(\mu, \lambda\big)$ with
weight $\wt(\pi_1) \cdots \wt(\pi_{\s N})
= a_1^{j_1} \cdots a_k^{j_k}$. Clearly

\[ \Path^{(i)}_{\, \bf i} \big(\mu, \lambda\big) \ = \
\bigsqcup_{\D \ \, {\bf j} \in \Bbb{Z}^k_{\s \geq 0}}  \ 
\Path^{(i)}_{\, \bf i} \big(\mu, \lambda; {\bf j}\big) \]

\bigskip
\noindent
If $\mu = \emptyset$ we shall use the short hand notation
$\Path^{(i)}_{\, \bf i}\big(\lambda\big)$ and 
$\Path^{(i)}_{\, \bf i}\big(\lambda; {\bf j}\big)$
instead.

\end{Def}

\begin{Rem}
In view of equation (2) and the Lindstr\"om lemma (3) it follows 
that 

\[ \Delta^{(i)}_{\mu, \lambda} \Big(
x_{i_1}(a_1) \cdots x_{i_k}(a_k) \Big) \ = \
\sum_{\D {\bf \pi} \in \Path^{(i)}_{\, \bf i} \big(\mu,\lambda\big) } 
\ \wt(\pi_0) \cdots \wt(\pi_{\s N})
\]

\noindent
where each ${\bf \pi} = \{\pi_0, \dots, \pi_{\s N}\}$ is a non-crossing
family of paths and where $\pi_n$ joins
source $u_n = \mu_n + i - n$ to sink $v_n = \lambda_n + i -n$
whenever $N \geq n \geq 0$ where $N = N_\lambda$.

\end{Rem}

\bigskip
\noindent
Consider now the case where $\mu = \emptyset$. As before
set $N = N_\lambda$ and let $u_n = i - n$ and $v_n =
\lambda_n + i -n$ whenever $N \geq n \geq 0$.
Choose ${\bf \pi} = \{\pi_1, \dots, \pi_{\s N}\}$ in
$\Path^{(i)}_{\, \bf i} \big(\lambda\big)$.
An index $t$ in $[1 \dots k]$ will be called an
{\it ascent} of a path $\pi$ in $\Gamma_{\bf i}({\bf a})$
if $\pi$ ascends along a diagonal edge within the
$\Gamma_{i_t}({\bf a})$ component.
Since $v_n - u_n = \lambda_n$ it must be the case that 
the path $\pi_n$ makes exactly $\lambda_n$ diagonal ascents as it
travels from $u_n$ up to $v_n$ in $\Gamma_{\bf i}({\bf a})$.
Let $t_{n,1} < \cdots < t_{n, \lambda_n}$ be the list
of the ascents of $\pi_n$. Since the paths are non-crossing
it also follows that the position of the $l$-th
ascent of $\pi_{n}$ must be directly under or to the right
of the $l$-th ascent of $\pi_{n-1}$; equivalently
$t_{n-1,l} \leq t_{n,l}$ whenever $\lambda_{n-1} \geq l
\geq 1$ and whenever $N \geq n \geq 1$.

\bigskip
\noindent
Record (in increasing order and from left to right)
the ascents of $\pi_n$ in the $n$-th row of $\lambda$
and repeat this for each $n$.
In this way we obtain
a semi-standard tableau $T_{\bf \pi}$ of shape 
$\lambda$ owing to the observations made above.
Moreover $\wt(\pi_1) \cdots \wt(\pi_{\s N})
= a_1^{j_1} \cdots a_k^{j_k}$ if and only if the
content of $T_{\pi}$ is ${\bf j}$. For example
the following pair ${\bf \pi} = \{\pi_0, \pi_1\}$
of (highlighted) non-crossing paths

\[ \xymatrix{
4 \ \circ \ar@{-->}[r]
& \circ \ar@{-->}[r]  
& \circ \ar@{-->}[r] 
& \circ \ar@{-->}[r]
& \circ \ar@{-->}[r] 
& \bullet \ 4 
\\
3 \ \circ \ar@{-->}[r] \ar@{-->}[ur] 
& \circ \ar@{-->}[r] 
& \circ \ar@{-->}[r] \ar@{-->}[ur]
& \circ \ar@{-->}[r]
& \bullet \ar@{-->}[r] \ar[ur]^{a_5}
& \circ \ 3 
\\
2 \ \circ \ar@{-->}[r]
& \bullet \ar[r] \ar@{-->}[ur] 
& \bullet \ar[r] 
& \bullet \ar@{-->}[r] \ar[ur]^{a_4}
& \circ \ar@{-->}[r] 
& \bullet \ 2 
\\
1 \ \bullet \ar@{-->}[r] \ar[ur]^{a_1}
& \circ \ar@{-->}[r] 
& \bullet \ar[r] \ar@{-->}[ur]
& \bullet \ar[r]
& \bullet \ar@{-->}[r] \ar[ur]^{a_5}
& \circ \ 1 
\\
0 \ \bullet \ar[r] 
& \bullet \ar@{-->}[r] \ar[ur]^{a_2}
& \circ \ar@{-->}[r]
& \circ \ar@{-->}[r] \ar@{-->}[ur]
& \circ \ar@{-->}[r]
& \circ \ 0 } \]

\bigskip
\noindent
would correspond to the tableau \ $T_{\bf \pi} = \young(145,25)$

\begin{Prop}
The map ${\bf \pi} \longmapsto T_{\bf \pi}$ is a bijection
between $\Path^{(i)}_{\, \bf i}\big(\lambda\big)$
and $\chess^{(i^*)}\big(\lambda \big)$
where $i^*$ is the parity of $i + i_1 + 1$. Moreover 
${\bf \pi} \in \Path^{(i)}_{\, \bf i}\big(\lambda; {\bf j}\big)$
if and only if $T_{\bf \pi} \in
\chess^{(i^*)}_{\s \bf j}\big(\lambda\big)$.

\end{Prop}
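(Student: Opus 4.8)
The excerpt already supplies the two ingredients on which everything rests: that the rule ${\bf \pi} \mapsto T_{\bf \pi}$ produces a semi-standard tableau of shape $\lambda$, and that $\wt(\pi_0)\cdots\wt(\pi_{\s N}) = a_1^{j_1}\cdots a_k^{j_k}$ exactly when $T_{\bf \pi}$ has content ${\bf j}$. Granting these, the ``moreover'' clause about $\Path^{(i)}_{\, \bf i}(\lambda;{\bf j})$ is immediate from the unrefined bijection together with the definitions of $\Path^{(i)}_{\, \bf i}(\lambda;{\bf j})$ and $\chess^{(i^*)}_{\s \bf j}(\lambda)$. So the plan reduces to two tasks: first, to check that $T_{\bf \pi}$ in fact obeys the chess parity constraint with parity $i^*$; second, to produce a two-sided inverse of ${\bf \pi}\mapsto T_{\bf \pi}$.

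For the first task, the point is to locate the height at which $\pi_n$ carries out its $l$-th ascent. Because a path in $\Gamma_{\bf i}({\bf a})$ climbs exactly one unit at each ascent and is otherwise level, and because $\pi_n$ starts at the source $u_n = i - n$, its $l$-th ascent --- running inside the component $\Gamma_{i_{t_{n,l}}}(a_{t_{n,l}})$, where $t_{n,1} < \cdots < t_{n,\lambda_n}$ are the ascents of $\pi_n$ --- lies along the diagonal edge issuing from the source at height $i - n + l - 1$. By the rule defining the chip diagrams that diagonal is present if and only if the parity of $i - n + l - 1$ equals $i_{t_{n,l}}$; since ${\bf i}$ is alternating with first bit $i_1$, one has $i_{t_{n,l}} \equiv i_1 + t_{n,l} - 1 \pmod 2$, so $t_{n,l} \equiv i + i_1 + l - n \pmod 2$. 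But $t_{n,l}$ is precisely the entry that $T_{\bf \pi}$ places in the box of $\lambda$ of row coordinate $n$ and column coordinate $l - 1$, whose $i^*$-parity is the parity of $n + (l - 1) + i^*$; a one-line comparison shows that $n + l - 1 + i^* \equiv t_{n,l} \pmod 2$ holds identically precisely when $i^* \equiv i + i_1 + 1 \pmod 2$. Hence $T_{\bf \pi}$ satisfies the chess parity condition, and being semi-standard it is then row- and column-strict by the remark above, so $T_{\bf \pi} \in \chess^{(i^*)}(\lambda)$. Crucially for the next task, the chain of equivalences just run is reversible.

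For the second task, observe that a path in $\Gamma_{\bf i}({\bf a})$ is completely determined by its starting source together with the set of components in which it ascends, since it traverses exactly one edge in every component and this must be the unique diagonal edge when that component is a prescribed ascent and the unique horizontal edge otherwise (in particular at most one ascent per component, so ascent lists are automatically strictly increasing). Thus, given $S \in \chess^{(i^*)}(\lambda)$ with $n$-th row $t_{n,1} < \cdots < t_{n,\lambda_n}$, I define $\pi_n$ to be the path starting at $i - n$ and ascending exactly in the components $t_{n,1},\dots,t_{n,\lambda_n}$. Two points then require verification. First, that $\pi_n$ really exists, i.e. that the needed diagonal edges are there: this is the reverse of the computation above, applied to the hypothesis that $S$ is a chess tableau of parity $i^*$, and it forces $\pi_n$ to terminate at height $i - n + \lambda_n = v_n$. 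Second, that the family $\{\pi_0,\dots,\pi_{\s N}\}$ is non-crossing: here one runs the standard Lindstr\"om height bookkeeping, namely that at the source-column of every component the number of ascents $\pi_n$ has already made is at most the number $\pi_{n-1}$ has already made (this is where the column comparison $t_{n-1,l}\leq t_{n,l}$ of $S$ enters), whence $\pi_{n-1}$ sits strictly above $\pi_n$ at every vertex it passes through and the two paths can share neither a vertex nor an edge. By construction the resulting family ${\bf \pi}$ satisfies $T_{\bf \pi} = S$, and conversely the recipe applied to $T_{\bf \pi}$ recovers the original ${\bf \pi}$ by the uniqueness just noted; so the two maps are mutually inverse.

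The genuinely substantive step is the parity/height bookkeeping of the first task: one must carefully match the height of the $l$-th ascent of $\pi_n$ in the chip diagram against the $i^*$-parity of the box in column $l-1$ of row $n$, keeping track of the conventions that columns are counted from zero and that ${\bf i}$ alternates from $i_1$, and it is this matching that pins down the specific value $i^* \equiv i + i_1 + 1$ as well as making the diagonals required for the inverse construction exist. By comparison the non-crossing verification in the second task is the routine Lindstr\"om--Gessel--Viennot argument specialized to this locally finite network, and the content-preservation statement needs nothing beyond the weight computation already carried out in the excerpt.
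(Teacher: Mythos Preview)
Your proof is correct and follows essentially the same approach as the paper's: verifying the chess parity condition by tracking the height of each ascent against the parity of the corresponding chip diagram, and deducing bijectivity from the fact that a path is uniquely determined by its source and its list of ascents. Your treatment is considerably more explicit than the paper's terse version --- in particular you spell out the height computation pinning down $i^*$ and carry out the non-crossing verification for the inverse, both of which the paper leaves to the reader.
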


\begin{proof}
Since $\Gamma_{i_t}(a_t)$ allows ascents only from
vertices of parity $i_t$ and since $\Gamma_{\bf i}({\bf a})$
is constructed as an alternating concatenation
of such graphs it follows that 
(1) the parities of the ascents of any path must alternate 
when read from left to right
and (2) the parity of the first ascent equals the 
parity of $i + l + 1$ where $l$ is index of the source vertex. 
The source vertices 
of $\pi_0, \dots, \pi_{\s N}$ are $i, \dots, i-N_\lambda$
respectively. Consequently $T_{\bf \pi}
\in \chess^{(i^*)}\big(\lambda \big)$.

\bigskip
\noindent
Both injectivity and surjectivity follow from the fact
that a path in $\Gamma_{\bf i}({\bf a})$  
is uniquely determined by it source, sink, and
list of parity alternating ascents. The refinement
of these results to weight and content is obviously
true.

\end{proof}

\bigskip
\noindent
{\bf Proof of Theorem 2:}
Let $\lambda$ be a partition, let $i=0,1$ be a choice of 
parity, let ${\bf i} = (i_1, \dots, i_k)$ be an
alternating bit string in $\{0,1\}^k$, and let 
${\bf a} = (a_1, \dots, a_k)$ be a $k$-tuple in 
$\big(\Bbb{C}^*\big)^k$ then

\[ \begin{array}{ll}
{\mathlarger{\mathlarger{\mathlarger\Delta}}}^{(i)}_{ {\s \emptyset} , \lambda} 
\Big( x_{i_1}(a_1) \cdots x_{i_k}(a_k) \Big)
&= \ {\D \sum_{\D \ {\bf \pi} \in \Path^{(i)}_{\, \bf i} \big(\lambda\big) } 
\wt(\pi_0) \cdots \wt(\pi_{\s N}) } \\ \\
&= \ {\D \sum_{ \ \ \textstyle {\bf j} \in \Bbb{Z}_{\s \geq 0}^k } \ 
\sum_{\D {\bf \pi} \in \Path^{(i)}_{\, \bf i} \big(\lambda; {\bf j} \big) } 
\wt(\pi_0) \cdots \wt(\pi_{\s N}) } \\ \\
&= \ {\D \sum_{ \textstyle  \ \ {\bf j} \in \Bbb{Z}_{\s \geq 0}^k }  
\Big| \chess^{(i^*)}_{\s \bf j} \big( \lambda \big) \Big| \  
a_1^{j_1} \cdots a_k^{j_k} } \\ \\
&= \ {\D \sum_{ \textstyle  \ \ {\bf j} \in \Bbb{Z}_{\s \geq 0}^k }  
\chi \bigg( 
\mathcal{F}^{\s \Lambda}_{\bf  i^j } (M) \bigg) \ 
{a_1^{j_1} \cdots a_k^{j_k} \over {j_1! \cdots j_k!}} }
\end{array} \]

\begin{Rem}
It is worth mentioning here that if in the right hand side 
of the formula

\[ {\mathlarger{\mathlarger\Delta}}^{(i)}_{ {\s \emptyset} , \lambda} 
\Big( x_{i_1}(a_1) \cdots x_{i_k}(a_k) \Big) \ = \
\sum_{ \textstyle  \ \ {\bf j} \in \Bbb{Z}_{\s \geq 0}^k }  
\Big| \chess^{(i^*)}_{\s \bf j} \big( \lambda \big) \Big| \  
a_1^{j_1} \cdots a_k^{j_k} \]

\bigskip
\noindent
the chess condition is dropped the resulting sum 

\[ \sum_{ \textstyle  \ \ {\bf j} \in \Bbb{Z}_{\s \geq 0}^k }  
\Big| \Tab_{\s \bf j} \big( \lambda \big) \Big| \  
a_1^{j_1} \cdots a_k^{j_k} \]

\bigskip
\noindent
is precisely the definition of the {\it Schur polynomial}
$S_\lambda$ in the variables $a_1, \dots, a_k$; see
\cite{Fulton}. This observation taken together with
the remark made in section (4) about the $i$-Pieri rule
suggest that the block-Toeplitz minors $\Delta^{(i)}_{{\s \emptyset},
\lambda}$ should be viewed as generalized Schur polynomials.

\end{Rem}

\bigskip
\bigskip

\end{document}